	\numberwithin{equation}{section}
	\numberwithin{figure}{section}
	\theoremstyle{plain}
	\newtheorem{thm}{Theorem}[section]
	\theoremstyle{plain}
	\newtheorem{lem}[thm]{Lemma}
	\theoremstyle{definition}
	\newtheorem{defn}[thm]{Definition}
	\theoremstyle{remark}
	\newtheorem{rem}[thm]{Remark}
	\theoremstyle{plain}
	\newtheorem{prop}[thm]{Proposition}
	\theoremstyle{plain}
	\newtheorem{cor}[thm]{Corollary}
\begin{document}
	\title[\hfill\protect\parbox{1\linewidth}{\center{A strong open mapping theorem for surjections\\ from cones onto Banach spaces}}]
	{A strong open mapping theorem for surjections from cones onto Banach spaces} 
	
	\begin{abstract}
		We show that a continuous additive positively homogeneous map from
a closed not necessarily proper cone in a Banach space onto a Banach
space is an open map precisely when it is surjective. This generalization
of the usual Open Mapping Theorem for Banach spaces is then combined
with Michael's Selection Theorem to yield the existence of a continuous
bounded positively homogeneous right inverse of such a surjective
map; a strong version of the usual Open Mapping Theorem is then
a special case. As another consequence, an improved version of the
analogue of And\^o's Theorem for an ordered Banach space is obtained
for a Banach space that is, more generally than in And\^o's Theorem,
a sum of possibly uncountably many closed not necessarily proper cones.
Applications are given for a (pre)-ordered Banach space and for various
spaces of continuous functions taking values in such a Banach space
or, more generally, taking values in an arbitrary Banach space that
is a finite sum of closed not necessarily proper cones.
	\end{abstract}

	% % % % % % % % % % % % % % % % %
	%	AUTHOR BIOGRAPHICAL DATA (name, address, email)
	% % % % % % % % % % % % % % % % %
	%Messerschmidt

\newcommand{\messerschmidtName}{Miek Messerschmidt}

\newcommand{\messerschmidtAddressLeiden}{
	Mathematical Institute, 
	Leiden University, 
	P.O. Box 9512, 
	\mbox{2300 RA} Leiden, 
	The Netherlands
}
\newcommand{\messerschmidtAddressNWU}{
	Unit for BMI,
	North-west University,
	Private Bag X6001,
	Potchefstroom,
	South Africa,
	2520
}

\newcommand{\messerschmidtAddress}{\messerschmidtAddressNWU}
\newcommand{\messerschmidtEmail}{mmesserschmidt@gmail.com}

%de Jeu

\newcommand{\dejeuName}{Marcel de Jeu}

\newcommand{\dejeuAddress}{
	Mathematical Institute, 
	Leiden University, 
	P.O. Box 9512, 
	\mbox{2300 RA} Leiden, 
	The Netherlands
}

\newcommand{\dejeuEmail}{mdejeu@math.leidenuniv.nl}

	%Marcel de jeu
	\author{\dejeuName}
	\address{\dejeuName, \dejeuAddress}
	\email{\dejeuEmail}
	
	%Miek Messerschmidt
	\author{\messerschmidtName}
	\address{\messerschmidtName, \messerschmidtAddressLeiden}
	\address{\footnote{Current address}\messerschmidtName, \messerschmidtAddress}
	\email{\messerschmidtEmail}

	% % % % % % % % % % % % % % % % %
	%	META INFO (KEYWORDS, SUBJECT CLASS)
	% % % % % % % % % % % % % % % % %
	\newcommand{\subjectClassesForThisPaper}{
		Primary: 47A05; 
		Secondary: 
			46A30\subjclasssep
			46B20\subjclasssep
			46B40\ignorespaces
}

\newcommand{\keywordsForThisPaper}{
	cone\keywordssep 
	Open Mapping Theorem\keywordssep 
	Banach space\keywordssep 
	bounded continuous right inverse\keywordssep 
	ordered Banach space\ignorespaces
}

	%Seperators for keywords and subject classes "\sep" or ", "
	\newcommand{\keywordssep}{, }
	\newcommand{\subjclasssep}{, }
	
	\keywords{\keywordsForThisPaper}
	\subjclass[2010]{\subjectClassesForThisPaper}
	
	\maketitle
	
	% % % % % % % % % % % % % % % % %
	%	Content
	% % % % % % % % % % % % % % % % %
	
	\section{Introduction}

Consider the following question, that arose in other research of the 
authors: Let $X$ be a real Banach space, ordered by a closed generating
proper cone $X^{+}$, and let $\Omega$ be a topological space. Then
the Banach space $C_{0}(\Omega,X)$, consisting of the continuous
$X$-valued functions on $\Omega$ vanishing at infinity, is ordered
by the natural closed proper cone $C_{0}(\Omega,X^{+})$. Is this
cone also generating? If $X$ is a Banach lattice, then the answer
is affirmative. Indeed, if $f\in C_{0}(\Omega,X)$, then $f=f^{+}-f^{-}$,
where $f^{\pm}(\omega):=f(\omega)^{\pm}\,\,(\omega\in\Omega)$. Since
the maps $x\mapsto x^{\pm}$ are continuous, $f^{\pm}$ is continuous,
and since $\Vert f^{\pm}(\omega)\Vert\leq\Vert f(\omega)\Vert\,\,(\omega\in\Omega)$,
$f^{\pm}$ vanishes at infinity. Thus a decomposition as desired has
been obtained. For general $X$, the situation is not so clear. The
natural approach is to consider a pointwise decomposition as in the
Banach lattice case, but for this to work we need to know that at
the level of $X$ the constituents $x^{\pm}$ in a decomposition $x=x^{+}-x^{-}$
can be chosen in a continuous and simultaneously also bounded (in
an obvious terminology) fashion, as $x$ varies over $X$. Boundedness
is certainly attainable, due to the following classical result:
\begin{thm}
\label{thm:Ando_theorem}\textup{(}And\^o's Theorem \citep{Ando}\textup{)}
Let $X$ be a real Banach space ordered by a closed generating proper
cone $X^{+}\subseteq X$. Then there exists a constant $K>0$ with
the property that, for every $x\in X$, there exist $x^{\pm}\in X^{+}$
such that $x=x^{+}-x^{-}$ and $\Vert x^{\pm}\Vert\leq K\Vert x\Vert$.
\end{thm}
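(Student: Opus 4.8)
The plan is to run the classical Baire-category argument underlying the Open Mapping Theorem, specialized to the decomposition set. Put $B := \{x \in X^{+} : \Vert x\Vert \leq 1\}$ and let $C := B - B = \{u - v : u,v \in B\}$ be the set of elements admitting a decomposition with both constituents of norm at most $1$. Since $X^{+}$ is a cone, $B$ is convex, hence $C$ is convex, and clearly $C = -C$ is symmetric. The theorem will follow once I show that $C$ contains a ball $\delta B_{X}$ about the origin for some $\delta > 0$ (where $B_{X}$ is the closed unit ball of $X$): then an arbitrary $x \neq 0$ can be scaled into $\delta B_{X}$, decomposed there, and scaled back, producing a constant $K$ proportional to $1/\delta$ by positive homogeneity. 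Note that only closedness and the generating property of $X^{+}$ will be used; properness plays no role in this bounded-decomposition statement.

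First I would locate a ball in the closure $\overline{C}$. Because $X^{+}$ is generating, every $x = a - b$ with $a,b \in X^{+}$ lies in $nC$ once $n > \max(\Vert a\Vert, \Vert b\Vert)$, so $X = \bigcup_{n=1}^{\infty} nC \subseteq \bigcup_{n=1}^{\infty} n\overline{C}$. As $X$ is a complete metric space, the Baire Category Theorem forces some $n\overline{C}$, and hence $\overline{C}$ itself, to have nonempty interior. If $p$ is an interior point of $\overline{C}$, then by symmetry $-p$ is too, and by convexity so is the midpoint $0 = \tfrac{1}{2}p + \tfrac{1}{2}(-p)$; averaging balls of a common radius around $p$ and $-p$ yields $\delta B_{X} \subseteq \overline{C}$ for some $\delta > 0$.

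The main obstacle is removing the closure, i.e.\ upgrading $\delta B_{X} \subseteq \overline{C}$ to a genuine decomposition with controlled norms, and I would do this by the standard successive-approximation scheme. Given $x$ with $\Vert x\Vert \leq \delta$, I choose $u_{1}, v_{1} \in B$ with $\Vert x - (u_{1} - v_{1})\Vert < \delta/2$; applying the inclusion $\delta B_{X} \subseteq \overline{C}$ to the rescaled remainder repeatedly produces $u_{n}, v_{n} \in X^{+}$ with $\Vert u_{n}\Vert, \Vert v_{n}\Vert \leq 2^{-(n-1)}$ and remainders shrinking geometrically. Completeness of $X$ then guarantees that $u := \sum_{n} u_{n}$ and $v := \sum_{n} v_{n}$ converge absolutely; since $X^{+}$ is closed and each partial sum lies in $X^{+}$, we obtain $u,v \in X^{+}$, while $x = u - v$ and $\Vert u\Vert, \Vert v\Vert \leq \sum_{n=1}^{\infty} 2^{-(n-1)} = 2$. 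Hence every $x$ with $\Vert x\Vert \leq \delta$ decomposes as $x = x^{+} - x^{-}$ with $x^{\pm} \in X^{+}$ of norm at most $2$, and scaling gives $\Vert x^{\pm}\Vert \leq (2/\delta)\Vert x\Vert$ for all $x$, so $K := 2/\delta$ works. The delicate point throughout is the geometric control of the norms in the approximation step, which is exactly what makes the limit lie in the closed cone $X^{+}$ while keeping $\Vert x^{\pm}\Vert$ bounded.
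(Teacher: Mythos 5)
Your proof is correct: the Baire-category step, the symmetrization giving $\delta B_{X}\subseteq\overline{C}$, and the geometric successive-approximation scheme (with closedness of $X^{+}$ guaranteeing that the limits of the partial sums stay in the cone) are all sound, and your observation that properness of $X^{+}$ is never used matches the paper's own remark that this hypothesis is redundant. However, your route differs from the paper's. The paper does not prove And\^o's Theorem directly; it derives it as a special case of a generalized Open Mapping Theorem (Theorem \ref{thm:open_mapping_theorem}) for continuous additive positively homogeneous surjections $T:C\to X$ from a complete metric cone, applied to the summing map on the product cone built from $X^{+}$ and $-X^{+}$. The engine there is Proposition \ref{prop:equivalent_norm}: one forms the Minkowski functional $\Vert\cdot\Vert_{V}$ of $V:=T(B)\cap(-T(B))$, proves that $(X,\Vert\cdot\Vert_{V})$ is complete via an absolutely-convergent-series argument (which plays exactly the role of your successive approximation), and then invokes the Bounded Inverse Theorem for Banach spaces, so Baire category enters only indirectly through the classical Open Mapping Theorem. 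What your approach buys is a self-contained, elementary proof of precisely this statement, with an explicit constant $K=2/\delta$; what the paper's abstraction buys is generality --- the same argument covers sums of arbitrarily (even uncountably) many closed cones and, crucially, sets up statement (2) of Theorem \ref{thm:open_mapping_theorem} in a form that can be fed into Michael's Selection Theorem to upgrade the bounded decomposition to a continuous positively homogeneous one, which your pointwise construction cannot yield (your $x^{\pm}$ depend on arbitrary choices at each approximation step and come with no continuity in $x$).
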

Continuity is not asserted, however. Hence we are not able to settle
our question in the affirmative via And\^o's Theorem alone and stronger
results are needed. With $\Omega$ a compact Hausdorff space, it is
a consequence of a result due to Asimow and Atkinson \citep[Theorem 2.3]{AsimowAtkinson}
that $C(\Omega,X^{+})$ is generating in $C(\Omega,X)$ when $X^{+}$
is closed and generating in $X$. A similar result due to Wickstead
\citep[Theorem 4.4]{Wickstead} establishes this for $C_{0}(\Omega,X)$
when $\Omega$ is locally compact (cf.\,Remark \ref{rem:asmiow-atkinson-wikstead}).
We will also retrieve these results, but by a different method, namely
by establishing the general existence of a continuous bounded decompositions,
analogous to that for Banach lattices, as a special case of Theorem
\ref{thm:continuous_functions_arbitrary_topological_space} below. 

In fact, although the above question and results are in the context
of ordered Banach spaces, it will become clear in this paper that
for these spaces one is merely looking at a particular instance of
more general phenomena. In short: if $T:C\to X$ is a continuous additive
positively homogeneous surjection from a closed not necessarily proper
cone in a Banach space onto a Banach space, then $T$ has a well-behaved
right inverse, and (stronger) versions of theorems such as And\^o's,
where several cones in one space are involved, are then almost immediately
clear. We will now elaborate on this, and at the same time explain
the structure of the proofs.

The usual notation to express that $X^{+}$ is generating is to write
$X=X^{+}-X^{+}$, but the actually relevant point turns out to be
that $X=X^{+}+(-X^{+})$ is a sum of two closed cones: the fact that
these are related by a minus sign is only a peculiarity of the context.
In fact, if $X$ is the sum of possibly uncountably many closed cones,
which need not be proper (this is redundant in And\^o's Theorem),
then it is possible to choose a bounded decomposition: this is the
content of the first part of Theorem \ref{thm:continuous_selections_from_generating_cones_ell_one_case}.
However, this is in itself a consequence of the following more fundamental
result, a special case of Theorem \ref{thm:open_mapping_theorem}.
Since a Banach space is a closed cone in itself, it generalizes the
usual Open Mapping Theorem for Banach spaces, which is used in the
proof.
\begin{thm}
\textup{\label{thm:ch1Open_Mapping_Theorem_introduction_version}(}Open
Mapping Theorem\textup{)} Let $C$ be a closed cone in a real or complex
Banach space, not necessarily proper. Let $X$ be a real or complex
Banach space, not necessarily over the same field as the surrounding
space of $C$, and $T:C\to X$ a continuous additive positively homogeneous
map. Then the following are equivalent:
\begin{enumerate}
\item $T$ is surjective;  
\item There exists some constant $K>0$ such that, for every $x\in X$,
there exists some $c\in C$ with \textup{$x=Tc$} and $\|c\|\leq K\|x\|$;
\item $T$ is an open map; 
\item 0 is an interior point of $T(C)$.
\end{enumerate}
\end{thm}
As an illustration of how this can be applied, suppose $X=\sum_{i\in I}C_{i}$
is the sum of a finite (for the ease of formulation) number of closed
not necessarily proper cones. We let $Y$ be the sum of $|I|$ copies
of $X,$ and let $C\subset Y$ be the direct sum of the $C_{i}$'s.
Then the natural summing map $T:C\to X$ is surjective by assumption,
so that part (2) of Theorem \ref{thm:ch1Open_Mapping_Theorem_introduction_version}
provides a bounded decomposition. And\^o's Theorem corresponds to
the case where $X$ is the image of $X^{+}\times(-X^{+})\subset X\times X$
under the summing map. 

In this fashion, generalizations of And\^o's Theorem are obtained
as a consequence of an Open Mapping Theorem. However, this still does
not resolve the issue of a decomposition that is not only bounded,
but continuous as well. A possible attempt to obtain this would be
the following: if $T:Y\to X$ is a continuous linear surjection between
Banach spaces (or even Fr\'echet spaces), then $T$ has a continuous
right inverse, see \citep[Corollary 17.67]{AliprantisBorder}. The proof
is based on Michael's Selection Theorem, which we will recall in Section
2. Conceivably, the proof as in \citep{AliprantisBorder} could be modified
to yield a similar statement for a continuous surjective additive
positively homogeneous $T:C\to X$ from a closed cone $C$ in a Banach
space onto $X$. In that case, if $X=\sum_{i\in I}C_{i}$ is a finite
(say) sum of closed not necessarily proper cones, the setup with product
cone and summing map would yield the existence of a continuous decomposition,
but unfortunately this time there is no guarantee for boundedness.
Somehow the generalized Open Mapping Theorem as in Theorem \ref{thm:ch1Open_Mapping_Theorem_introduction_version}
and Michael's Selection Theorem must be combined. The solution lies
in a refinement of the correspondences to which Michael's Selection
Theorem is to be applied, and take certain subadditive maps on $C$
into account from the very beginning. In the end, one of these maps
will be taken to be the norm on $C$, and this provides the desired
link between the generalized Open Mapping Theorem and Michael's Selection
Theorem, cf.\  the proof of Proposition \ref{prop:key_proposition}.
It is along these lines that the following is obtained. It is a special
case of Theorem \ref{thm:continuous_right_inverses_exists} and, as
may be clear by now, it implies the existence of a continuous bounded
(and even positively homogeneous) decomposition if $X=\sum_{i\in I}C_{i}$.
It also shows that, if $T:Y\to X$ is a continuous linear surjection
between Banach spaces, then it is not only possible to choose a bounded
right inverse for $T$ (a statement equivalent to the usual Open Mapping
Theorem), but also to choose a bounded right inverse that is, in addition,
continuous and positively homogeneous. 
\begin{thm}
\label{thm:bounded_continuous_inverses_exist_introduction_version} 
\textup{(}Strong Open Mapping Theorem\textup{)}
Let
$X$ and $Y$ be real or complex Banach spaces, not necessarily over
the same field, and let $C$ be a closed not necessarily proper cone
in $Y$. Let $T:C\to X$ be a surjective continuous additive positively
homogeneous map.

Then there exists a constant $K>0$ and a continuous positively homogeneous
map $\gamma:X\to C$, such that:
\begin{enumerate}
\item $T\circ\gamma=\mbox{id}_{X}$;
\item $\Vert\gamma(x)\Vert\leq K\Vert x\Vert$, for all $x\in X$.
\end{enumerate}
\end{thm}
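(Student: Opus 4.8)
My plan is to combine the equivalence of conditions (1), (2) and (3) in Theorem~\ref{thm:Open_Mapping_Theorem_introduction_version} with Michael's Selection Theorem, constructing a continuous selection on the unit sphere of $X$ and then extending it to all of $X$ by positive homogeneity. The case $X=\{0\}$ being trivial, assume $X\neq\{0\}$ and write $S_X=\{x\in X:\|x\|=1\}$. By part~(2) of the Open Mapping Theorem there is a constant $K_0>0$ such that each $x\in X$ equals $Tc$ for some $c\in C$ with $\|c\|\le K_0\|x\|$; fix any $K>K_0$. On $S_X$ I would consider the correspondence
\[
\Phi(x)=\{\,c\in C: Tc=x,\ \|c\|\le K\,\}.
\]
Because $T$ is additive and positively homogeneous, $C$ is a convex cone, and each value $\Phi(x)$ is then convex; it is closed since $T$ is continuous and $C$ is closed, and it is nonempty because for $\|x\|=1$ the point furnished by~(2) has norm at most $K_0<K$. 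Thus $\Phi$ has nonempty closed convex values in the Banach space $Y$, while $S_X$, being metric, is paracompact.

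The crux is to verify that $\Phi$ is lower semicontinuous, and this is precisely where the strict slack $K>K_0$ and the openness of $T$ are used. Given $x_0\in S_X$ and an open set $V\subseteq Y$ with $\Phi(x_0)\cap V\neq\emptyset$, I would first arrange a point of $\Phi(x_0)\cap V$ of norm strictly below $K$: choosing $\tilde c\in C$ with $T\tilde c=x_0$ and $\|\tilde c\|\le K_0$ and moving from a point $c_0\in\Phi(x_0)\cap V$ along the segment $(1-t)c_0+t\tilde c$, which stays in $C$ and is mapped by $T$ to $x_0$, yields for small $t>0$ a point still in $V$ but of norm at most $(1-t)K+tK_0<K$. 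Renaming this point $c_0$ and putting $\varepsilon=K-\|c_0\|>0$, I would invoke condition~(3): since $T$ is open, $T(B(c_0,r)\cap C)$ is a neighbourhood of $Tc_0=x_0$ for every $r>0$. Picking $r<\varepsilon$ small enough that $B(c_0,r)\subseteq V$, there is $\delta>0$ with $B(x_0,\delta)\subseteq T(B(c_0,r)\cap C)$, so every $x\in B(x_0,\delta)\cap S_X$ has a preimage $c\in B(c_0,r)\cap C$, which then satisfies $Tc=x$, $\|c\|<K$ and $c\in V$. Hence $\Phi(x)\cap V\neq\emptyset$ on a neighbourhood of $x_0$, which is lower semicontinuity.

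Michael's Selection Theorem then supplies a continuous selection $\gamma_0:S_X\to C$ with $T\gamma_0(x)=x$ and $\|\gamma_0(x)\|\le K$ for all $x\in S_X$. I would extend it by setting $\gamma(x)=\|x\|\,\gamma_0\!\left(x/\|x\|\right)$ for $x\neq0$ and $\gamma(0)=0$. Positive homogeneity of $T$ gives $T\gamma(x)=\|x\|\,(x/\|x\|)=x$, which is~(1); the estimate $\|\gamma(x)\|=\|x\|\,\|\gamma_0(x/\|x\|)\|\le K\|x\|$ is~(2); positive homogeneity $\gamma(\lambda x)=\lambda\gamma(x)$ for $\lambda>0$ is immediate from the definition; and $\gamma$ is continuous away from $0$ as a composition of continuous maps, while continuity at $0$ follows from the bound in~(2).

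I expect the lower semicontinuity of $\Phi$ to be the only real obstacle: with the tight bound $K=K_0$ a chosen preimage could sit on the sphere $\|c\|=K_0$, leaving no room to perturb it while keeping the norm constraint, and $\Phi$ would generally fail to be lower semicontinuous. Choosing $K>K_0$ is exactly the device that converts the open mapping property into the slack needed for Michael's theorem; in the language of the introduction, this is the special role played here by the norm among the subadditive maps on $C$.
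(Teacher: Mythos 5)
Your proposal is correct and takes essentially the same route as the paper: its Theorem \ref{thm:continuous_right_inverses_exists} (of which the statement is the case $J=\emptyset$) is obtained by applying Michael's Selection Theorem to precisely this kind of norm-constrained correspondence on $S_{X}$, with the same convex-combination trick to create strict slack in the norm bound and the same positively homogeneous extension from the sphere, the norm playing the role of the distinguished subadditive map as in Proposition \ref{prop:key_proposition}. The only divergence is in the verification of lower hemicontinuity, where you invoke the openness of $T$ (condition (3) of the Open Mapping Theorem) to produce preimages near $c_{0}$, while the paper's Proposition \ref{prop:continuous_right_inverses_Frechet} constructs them additively as $y+\|x'\|\sigma_{\eta/2}(x'/\|x'\|)$ from a bounded family of possibly discontinuous right inverses --- the same mechanism that proves (2)$\Rightarrow$(3) in Theorem \ref{thm:open_mapping_theorem} --- which is what lets the paper work with $Y$ merely a topological vector space and carry along the extra constraints $\rho_{j}$ needed for its applications.
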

The underlying Proposition \ref{prop:key_proposition} is the core
of this paper. It is reworked into the somewhat more practical Theorems
\ref{thm:continuous_right_inverses_exists} and \ref{thm:improving_a_family_of_right_inverses_is_possible},
but this is all routine, as are the applications in Section \ref{sec:applications}.
For example, the following result (Corollary \ref{cor:ando_improved})
is virtually immediate from Section \ref{sec:main_results}. We cite
it in full, not only because it shows concretely how And\^o's Theorem
figuring so prominently in our discussion so far can be strengthened,
but also to enable us to comment on the interpretation of the various
parts of this result and similar ones.
\begin{thm}
Let $X$ be a real \textup{(}pre\textup{)}-ordered Banach space, \textup{(}pre\textup{)}-ordered
by a closed generating not necessarily proper cone $X^{+}$. Let $J$
be a finite set, possibly empty, and, for all $j\in J$, let $\rho_{j}:X\times X\to\mathbb{R}$
be a continuous seminorm or a continuous linear functional. Then:
\begin{enumerate}
\item There exist a constant $K>0$ and continuous positively homogeneous
maps $\gamma^{\pm}:X\to X^{+}$, such that:

\begin{enumerate}
\item $x=\gamma^{+}(x)-\gamma^{-}(x)$, for all $x\in X$;
\item $\Vert\gamma^{+}(x)\Vert+\Vert\gamma^{-}(x)\Vert\leq K\Vert x\Vert$,
for all $x\in X$.
\end{enumerate}
\item If $K>0$ and $\alpha_{j}\in\mathbb{R\,}(j\in J)$ are constants,
then the following are equivalent:

\begin{enumerate}
\item For every $\varepsilon>0$, there exist maps $\gamma_{\varepsilon}^{\pm}:S_{X}\to X^{+}$,
where $S_{X}:=\{x\in X:\|x\|=1\}$, such that:

\begin{enumerate}
\item $x=\gamma_{\varepsilon}^{+}(x)-\gamma_{\varepsilon}^{-}(x)$, for
all $x\in S_{X}$;
\item $\Vert\gamma_{\varepsilon}^{+}(x)\Vert+\Vert\gamma_{\varepsilon}^{-}(x)\Vert\leq(K+\varepsilon)$,
for all $x\in S_{X}$;
\item $\rho_{j}((\gamma_{\varepsilon}^{+}(x),\gamma_{\varepsilon}^{-}(x))\leq(\alpha_{j}+\varepsilon)$,
for all $x\in S_{X}$ and $j\in J$.
\end{enumerate}
\item For every $\varepsilon>0$, there exist continuous positively homogeneous
maps $\gamma_{\varepsilon}^{\pm}:X\to X^{+}$, such that:

\begin{enumerate}
\item $x=\gamma_{\varepsilon}^{+}(x)-\gamma_{\varepsilon}^{-}(x)$, for
all $x\in X$;
\item $\Vert\gamma_{\varepsilon}^{+}(x)\Vert+\Vert\gamma_{\varepsilon}^{-}(x)\Vert\leq(K+\varepsilon)\|x\|$,
for all $x\in X$;
\item $\rho_{j}((\gamma_{\varepsilon}^{+}(x),\gamma_{\varepsilon}^{-}(x))\leq(\alpha_{j}+\varepsilon)\|x\|$,
for all $X$ and $j\in J$.
\end{enumerate}
\end{enumerate}
\end{enumerate}
\end{thm}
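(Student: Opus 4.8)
The plan is to reduce both parts to the general results of Section \ref{sec:main_results} via the product-cone-and-summing-map construction described in the introduction. Put $Y:=X\oplus X$, normed by $\|(y_{1},y_{2})\|:=\|y_{1}\|+\|y_{2}\|$, and let $C:=X^{+}\times X^{+}$, a closed (not necessarily proper) cone in $Y$. Define the summing map $T\colon C\to X$ by $T(c_{1},c_{2}):=c_{1}-c_{2}$. Then $T$ is continuous, additive and positively homogeneous, and it is surjective precisely because $X^{+}$ is generating. Identifying a map $\gamma=(\gamma^{+},\gamma^{-})\colon X\to C$ with the pair $\gamma^{\pm}\colon X\to X^{+}$, the condition $T\circ\gamma=\mathrm{id}_{X}$ translates into the decomposition $x=\gamma^{+}(x)-\gamma^{-}(x)$, while $\|\gamma(x)\|=\|\gamma^{+}(x)\|+\|\gamma^{-}(x)\|$ by the choice of norm on $Y$.

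Part (1) then follows at once by applying Theorem \ref{thm:bounded_continuous_inverses_exist_introduction_version} to this $T$: it yields a constant $K>0$ and a continuous positively homogeneous right inverse $\gamma$ of $T$ with $\|\gamma(x)\|\le K\|x\|$, and unravelling via the identification above gives maps $\gamma^{\pm}\colon X\to X^{+}$ satisfying (1a) and (1b).

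For part (2), the implication (b)$\Rightarrow$(a) is immediate by restriction to $S_{X}$, where $\|x\|=1$ makes the $\|x\|$-scaled bounds collapse to the stated constant bounds. For (a)$\Rightarrow$(b) I would apply the improvement result Theorem \ref{thm:improving_a_family_of_right_inverses_is_possible}, which rests on Proposition \ref{prop:key_proposition}, to $T$ together with the finite family of continuous positively homogeneous subadditive maps on $C$ given by the norm $\nu(c):=\|c_{1}\|+\|c_{2}\|$ and the maps $c\mapsto\rho_{j}(c_{1},c_{2})$ for $j\in J$; note that a seminorm is subadditive and positively homogeneous, while a linear functional is even additive and positively homogeneous, so all of these are admissible. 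Hypothesis (a) says exactly that for every $x\in S_{X}$ and every $\varepsilon>0$ there is a preimage $c\in C$ with $Tc=x$, $\nu(c)\le K+\varepsilon$, and $\rho_{j}(c)\le\alpha_{j}+\varepsilon$ for all $j\in J$. This is precisely the pointwise data the improvement theorem converts into a single continuous positively homogeneous selection on all of $X$ satisfying the corresponding $\|x\|$-scaled bounds; writing the output as $\gamma_{\varepsilon}=(\gamma_{\varepsilon}^{+},\gamma_{\varepsilon}^{-})$ and keeping track of the $\varepsilon$'s yields (b).

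The main obstacle is the exact matching of hypothesis (a) to the input of Theorem \ref{thm:improving_a_family_of_right_inverses_is_possible}. Two points warrant care. First, the maps in (a) are defined only on $S_{X}$ and are not assumed continuous, so one must confirm that their existence for each $\varepsilon$ is exactly the norm-one pointwise data the theorem consumes, and that the return to all of $X$ by positive homogeneity is legitimate. Second, when $\rho_{j}$ is a linear functional the constraint $\rho_{j}(c)\le\alpha_{j}+\varepsilon$ is genuinely one-sided and $\rho_{j}$ is sign-indefinite, so one must check that the subadditive generality of Proposition \ref{prop:key_proposition}---rather than merely a seminorm (sublinear) generality---indeed covers this case. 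Granting these verifications, which the results of Section \ref{sec:main_results} are built to provide, the corollary is immediate.
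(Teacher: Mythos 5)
Your proposal is correct and is essentially the paper's own argument: the paper obtains this result (Corollary \ref{cor:ando_improved}) by specializing Theorem \ref{thm:continuous_selections_from_generating_cones_ell_one_case} to $I=\{1,2\}$, $C_{1}=X^{+}$, $C_{2}=-X^{+}$, and that theorem is in turn proved exactly as you do---by feeding the product cone in an $\ell^{1}$-direct sum and the summing map into Theorems \ref{thm:continuous_right_inverses_exists} and \ref{thm:improving_a_family_of_right_inverses_is_possible} (equivalently, Proposition \ref{prop:key_proposition}), with your two ``points warranting care'' resolved precisely as you suggest, since those results require only subadditivity and positive homogeneity of the $\rho_{j}$'s and consume exactly the pointwise sphere data of (2)(a). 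Your only deviation is cosmetic: you take $C=X^{+}\times X^{+}$ with $T(c_{1},c_{2})=c_{1}-c_{2}$ rather than $X^{+}\times(-X^{+})$ with the summing map, which merely avoids the sign bookkeeping $\gamma^{-}=-\gamma_{2}$.
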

The existence of a bounded continuous positively homogeneous decomposition
in part (1) is of course a direct consequence of Theorem \ref{thm:bounded_continuous_inverses_exist_introduction_version}.
Naturally, the argument as for Banach lattices then shows that $C_{0}(\Omega,X)=C_{0}(\Omega,X^{+})-C_{0}(\Omega,X^{+})$
for an arbitrary topological space $\Omega$, so that our original
question has been settled in the affirmative. 

The equivalence under (2) has the following consequence: If there
exist maps $\gamma^{\pm}:S_{X}\to X^{+}$, such that $x=\gamma^{+}(x)-\gamma^{-}(x)$,
$\|\gamma^{+}(x)\|+\|\gamma^{-}(x)\|\leq K$, and $\rho_{j}((\gamma^{+}(x),\gamma^{-}(x))\leq\alpha_{j}$,
for all $x\in S_{X}$ and $j\in J$, then certainly maps as under
(2)(a) exist (take $\gamma_{\varepsilon}^{\pm}=\gamma^{\pm}$, for
all $\varepsilon>0$), and hence a family of much better behaved global
versions exists as under (2)(b), at an arbitrarily small price in
the constants. 

The possibility to include the $\rho_{j}$'s in part (2) (with similar
occurrences in other results) is a bonus from the refinement of the
correspondences to which Michael's Selection Theorem is applied. For
several issues, such as our original question concerning $C_{0}(\Omega,X),$
it will be sufficient to use part (1) and conclude that a continuous
bounded decomposition exists. In this paper we also include some applications
of part (2) with non-empty $J$. Corollary \ref{cor:continous_approximate_conormality}
shows that approximate $\alpha$-conormality of a (pre)-ordered Banach
space is equivalent with continuous positively homogeneous approximate
$\alpha$-conormality, and Corollary \ref{cor:equivalence_approximate_normality_for_X_and_for_function_spaces}
shows that approximate $\alpha$-conormality of $X$ is inherited
by various spaces of continuous $X$-valued functions on a topological
space.

\medskip

We emphasize that, although Banach spaces that are a sum of cones,
and ordered Banach spaces in particular, have played a rather prominent
role in this introduction, the actual underlying results are those
in Section \ref{sec:main_results}, valid for a continuous additive
positively homogeneous surjection $T:C\to X$ from a closed not necessarily
proper cone $C$ in a Banach space onto a Banach space $X$. That
is the heart of the matter.

\medskip

This paper is organized as follows.

Section \ref{sec:Preliminaries} contains the basis terminology and
some preliminary elementary results. The terminology is recalled in
detail, in order to avoid a possible misunderstanding due to differing
conventions. 

In Section \ref{sec:main_results} the Open Mapping Theorem for Banach
spaces and Michael's Selection Theorem are used to investigate surjective
continuous additive positively homogeneous maps $T:C\to X$.

Section \ref{sec:applications} contains the applications, rather
easily derived from Section \ref{sec:main_results}. Banach spaces
that are a sum of closed not necessarily proper cones are approached
via the naturally associated closed cone in a Banach space direct
sum and the summing map. The results thus obtained are then in turn
applied to a (pre)-ordered Banach space $X$ and to various spaces
of continuous $X$-valued functions.

\section{Preliminaries\label{sec:Preliminaries}}

In this section we establish terminology, include a few elementary
results concerning metric cones for later use, and recall Michael's
Selection Theorem.

If $X$ is a normed space, then $S_{X}:=\{x\in X:\Vert x\Vert=1\}$
denotes its unit sphere.

\subsection{Subsets of vector spaces}

For the sake of completeness we recall that a non-empty subset $A$
of a real vector space $X$ is \textit{star-shaped with respect to
0} if $\lambda x\in A$, for all $x\in A$ and $0\leq\lambda\leq1$,
and that it is \textit{balanced }if $\lambda x\in A$, for all $x\in A$
and $-1\leq\lambda\leq1$. $A$ is \textit{absorbing in $X$} if,
for all $x\in X$, there exists $\lambda>0$ such that $x\in\lambda A$.
$A$ is \textit{symmetric }if $A=-A.$

The next rather elementary property will be used in the proof of Proposition
\ref{prop:equivalent_norm}.
\begin{lem}
\label{lem:intersection_star_shaped_absorbing}Let $X$ be a real
vector space and suppose $A,B\subseteq X$ are star-shaped with respect
to 0 and absorbing. Then $A\cap B$ is star-shaped with respect to
0 and absorbing.\end{lem}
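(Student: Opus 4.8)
The plan is to verify the two asserted properties of $A\cap B$ separately. Star-shapedness is immediate from the hypotheses, so the real content lies in the absorbing property, where the two hypotheses must be used together.

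For star-shapedness with respect to $0$, I would take an arbitrary $x\in A\cap B$ and an arbitrary $\lambda$ with $0\leq\lambda\leq1$. Since $x\in A$ and $A$ is star-shaped with respect to $0$, we have $\lambda x\in A$; similarly $x\in B$ yields $\lambda x\in B$. Hence $\lambda x\in A\cap B$, as required.

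For the absorbing property, fix $x\in X$. Because $A$ is absorbing there exists $\lambda_{A}>0$ with $x\in\lambda_{A}A$, and because $B$ is absorbing there exists $\lambda_{B}>0$ with $x\in\lambda_{B}B$; equivalently, $\lambda_{A}^{-1}x\in A$ and $\lambda_{B}^{-1}x\in B$. The obstruction is that these two scalars need not coincide, so one cannot simply take a common value. The key observation is that star-shapedness allows one to \emph{enlarge} each scalar: setting $\lambda:=\max(\lambda_{A},\lambda_{B})$, we have $0\leq\lambda_{A}/\lambda\leq1$, and from the identity $\lambda^{-1}x=(\lambda_{A}/\lambda)(\lambda_{A}^{-1}x)$ together with $\lambda_{A}^{-1}x\in A$ and the star-shapedness of $A$, it follows that $\lambda^{-1}x\in A$. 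The same argument applied to $B$ gives $\lambda^{-1}x\in B$, so $\lambda^{-1}x\in A\cap B$, that is, $x\in\lambda(A\cap B)$. As $x\in X$ was arbitrary, $A\cap B$ is absorbing.

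The only genuine point, which I expect to be the crux of the argument, is the reconciliation of the two independently obtained absorbing constants by passing to their maximum and then using star-shapedness to scale back down into each of $A$ and $B$; star-shapedness is precisely what makes this work, and without it the intersection of two absorbing sets need not be absorbing.
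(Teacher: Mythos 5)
Your proof is correct and follows essentially the same route as the paper's: both establish star-shapedness directly and then, for the absorbing property, pass to $\max(\lambda_{A},\lambda_{B})$ and use star-shapedness to show membership in each set persists when the scalar is enlarged. The only difference is cosmetic—you make explicit the identity $\lambda^{-1}x=(\lambda_{A}/\lambda)(\lambda_{A}^{-1}x)$, which the paper leaves implicit in the phrase ``$x\in\lambda^{\prime}A$ for all $\lambda^{\prime}\geq\lambda$.''
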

\begin{proof}
It is clear that $A\cap B$ is star-shaped with respect to 0. Let
$x\in X$, then, since $A$ is absorbing, $x\in\lambda A$ for some
$\lambda>$0. The fact that $A$ is star-shaped with respect to 0
then implies that $x\in\lambda^{\prime}A$ for all $\lambda^{\prime}\geq\lambda$.
Likewise, $x\in\mu B$ for some $\mu>0$, and then $x\in\mu^{\prime}B$
for all $\mu^{\prime}\geq\mu$. Hence $x\in\max(\lambda,\mu)(A\cap B)$.
\end{proof}
A subset $C$ of the real or complex vector space $X$ is called a
\emph{cone in $X$ }if $C+C\subseteq C$ and $\lambda C\subseteq C$,
for all $\lambda\geq0$. We note that we do not require $C$ to be
a proper cone, i.e., that $C\cap(-C)=\{0\}$.

\subsection{Cones}

The cones figuring in the applications in Section \ref{sec:applications}
are cones in Banach spaces, but one of the two main results leading
to these applications, the Open Mapping Theorem (Theorem \ref{thm:open_mapping_theorem}),
can be established for the following more abstract objects.
\begin{defn}
\label{def:abstract_cone}Let $C$ be a set equipped with operations $+:C\times C\to C$
and $\cdot:\mathbb{R}_{\geq0}\times C\to C$. Then $C$ will be called
an \emph{abstract cone} if there exists an element $0\in C$, such
that the following hold for all $u,v,w\in C$ and $\lambda,\mu\in\mathbb{R}_{\geq0}$:
\begin{enumerate}
\item $u+0=u$;
\item $(u+v)+w=u+(w+v)$;
\item $u+v=v+u$;
\item $u+v=u+w$ implies $v=w$;
\item $1u=u$ and $0u=0$;
\item $(\lambda\mu)u=\lambda(\mu u$);
\item $(\lambda+\mu)u=\lambda u+\mu u$;
\item $\lambda(u+v)$=$\lambda u+\lambda v$.
\end{enumerate}
Here we have written $\lambda\cdot u$ as $\lambda u$ for short,
as usual. 

The natural class of maps between two cones $C_{1}$ and $C_{2}$
consists of the \textit{additive and positively homogeneous} maps,
i.e., the maps $T:C_{1}\to C_{2}$ such that $T(u+v)=Tu+Tv$ and $T(\lambda u)=\lambda u$,
for all $u,v\in C$ and $\lambda\geq0$. 
\end{defn}
{}
\begin{defn}
\label{def:metric-cone}A pair $(C,d)$ will be called a \emph{metric
cone} if $C$ is an abstract cone and $d:C\times C\to\mathbb{R}_{\geq0}$
is a metric, satisfying
\begin{eqnarray}
d(0,\lambda u) & = & \lambda d(0,u),\label{eq:positive_homogeneity}\\
d(u+v,u+w) & \leq & d(v,w),\label{eq:sub_translation_invariance}
\end{eqnarray}
for every $u,v,w\in C$ and $\lambda\geq0$ . A metric cone $(C,d)$
is a \textit{complete metric cone} if it is a complete metric space.\end{defn}
\begin{rem}
{}
\begin{enumerate}
\item \ Once Michael's Selection Theorem is combined with the Open Mapping
Theorem (Theorem \ref{thm:open_mapping_theorem}), $C$ will be a
closed not necessarily proper cone in a Banach space, and the metric
will be induced by the norm. In that case it is translation invariant,
but for the Open Mapping Theorem as such requiring (\ref{eq:sub_translation_invariance})
is already sufficient. The natural similar requirement $d(0,\lambda u)\leq\lambda d(0,u)$,
which is likewise sufficient for the proofs, is easily seen to be
actually equivalent to requiring equality as in (\ref{eq:positive_homogeneity})
above.
\item Although we will not use this, we note that, if $(C,d)$ is a metric
cone, then $+:C\times C\to C$ is easily seen to be continuous, as
is the map $\lambda\to\lambda u$ from $\mathbb{R}_{\geq0}$ into
$C$, for each $u\in C$.
\end{enumerate}
\end{rem}
The following elementary results will be needed in the proof of Proposition
\ref{prop:equivalent_norm}.
\begin{lem}
\label{lem:elementary_metric_cone_results}Let $(C,d)$ be a metric
cone as in Definition \ref{def:abstract_cone}.
\begin{enumerate}
\item If $c_{1},\ldots,c_{n}\in C$, then $d(0,\sum_{i=1}^{n}c_{i})\leq\sum_{i=1}^{n}d(0,c_{i}).$
\item Let $X$ be a real or complex normed space and suppose $T:C\to X$
is positively homogeneous and continuous at 0. Then $T$ maps metrically
bounded subsets of $C$ to norm bounded subsets of $X$
\end{enumerate}
\end{lem}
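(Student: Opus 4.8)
The plan is to treat the two parts separately, deriving (1) from the two defining properties of a metric cone and (2) from positive homogeneity together with continuity at $0$.

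For part (1), I would first establish the case $n=2$, namely $d(0,c_{1}+c_{2})\le d(0,c_{1})+d(0,c_{2})$, and then induct on $n$. The key observation is that sub-translation invariance \eqref{eq:sub_translation_invariance} lets one shift a summand out of the way. Concretely, the ordinary triangle inequality for the metric $d$ gives $d(0,c_{1}+c_{2})\le d(0,c_{1})+d(c_{1},c_{1}+c_{2})$; writing $c_{1}=c_{1}+0$ and applying \eqref{eq:sub_translation_invariance} then yields $d(c_{1},c_{1}+c_{2})=d(c_{1}+0,c_{1}+c_{2})\le d(0,c_{2})$, and combining these two inequalities settles the case $n=2$. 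The inductive step is immediate: apply the $n=2$ case to $\sum_{i=1}^{n}c_{i}$ and $c_{n+1}$, and invoke the induction hypothesis on the first term.

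For part (2), I would first note that $T0=0$: since $0\cdot 0=0$ in $C$ by Definition \ref{def:abstract_cone}(5), positive homogeneity gives $T0=T(0\cdot 0)=0\cdot T0=0$. A metrically bounded set $B\subseteq C$ is contained in a ball, so by the triangle inequality we may assume there is some $R>0$ with $d(0,c)\le R$ for all $c\in B$ (the case $B=\{0\}$ being trivial). Continuity of $T$ at $0$, applied with $\varepsilon=1$, furnishes $\delta>0$ such that $d(0,c)<\delta$ implies $\Vert Tc\Vert<1$. The point is then to exploit positive homogeneity to scale each $c\in B$ into this $\delta$-ball: putting $\lambda:=\delta/(2R)$, property \eqref{eq:positive_homogeneity} gives $d(0,\lambda c)=\lambda d(0,c)\le\delta/2<\delta$, hence $\lambda\Vert Tc\Vert=\Vert T(\lambda c)\Vert<1$, and therefore $\Vert Tc\Vert<2R/\delta$ for every $c\in B$. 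This uniform bound shows that $T(B)$ is norm bounded.

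Neither part presents a serious obstacle; the only things to get right are, in part (1), to apply \eqref{eq:sub_translation_invariance} in the correct form $d(c_{1}+0,c_{1}+c_{2})\le d(0,c_{2})$ rather than a genuine translation invariance that is not assumed here, and, in part (2), the observation that positive homogeneity alone suffices to upgrade continuity at the single point $0$ to boundedness on all metrically bounded sets.
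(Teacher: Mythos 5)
Your proof is correct and takes essentially the same route as the paper: part (1) via the ordinary triangle inequality combined with \eqref{eq:sub_translation_invariance} (in the form $d(c_{1}+0,c_{1}+c_{2})\leq d(0,c_{2})$) and induction, and part (2) by using \eqref{eq:positive_homogeneity} and positive homogeneity of $T$ to scale a bounded set into the $\delta$-ball furnished by continuity at $0$. The only cosmetic differences are your explicit $n=2$ base case and the harmless factor of $2$ in your final bound ($2R/\delta$ versus the paper's $\delta^{-1}r$).
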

\begin{proof}
For the first part, using the triangle inequality and (\ref{eq:sub_translation_invariance})
we conclude that $d(0,\sum_{i=1}^{n}c_{i})\leq d(0,c_{n})+d(c_{n},\sum_{i=1}^{n}c_{i})\leq d(0,c_{n})+d(0,\sum_{i=1}^{n-1}c_{i})$,
so the statement follows by induction.

As to the second part, by continuity of $T$ at zero there exists
some $\delta>0$ such that $\|Tc\|<1$ holds for all $c\in C$ satisfying
$d(0,c)<\delta$. If $U\subseteq C$ is bounded, choose $r>0$ such
that $U\subseteq\{c\in C:d(0,c)<r\}$. Since $d(0,\lambda u)=\lambda d(0,u)$,
for all $u\in C$ and $\lambda\geq0$, $\delta r^{-1}U\subseteq\{c\in C:d(0,c)<\delta\}$.
Then by positive homogeneity of $T$, $\sup_{u\in U}\|Tu\|\leq\delta^{-1}r<\infty$.
\end{proof}

\subsection{Correspondences}

Our terminology and definitions concerning correspondences follow
that in \citep{AliprantisBorder}. Let $A,B$ be sets. A map $\varphi$
from $A$ into the power set of $B$ is called a \emph{correspondence}\textit{
from $A$ into $B$}, and is denoted by $\varphi:A\twoheadrightarrow B$.
A \emph{selector }for a correspondence $\varphi:A\twoheadrightarrow B$
is a function $\sigma:A\to B$ such that $\sigma(x)\in\varphi(x)$
for all $a\in A$. If $A$ and $B$ are topological spaces, we say
a correspondence $\varphi$ is \emph{lower hemicontinuous} if, for
every $a\in A$ and every open set $U\subseteq B$ with $\varphi(a)\cap U\neq\emptyset$,
there exists an open neighborhood $V$ of $a$ in $A$ such that $\varphi(a')\cap U\neq\emptyset$
for every $a'\in V$. The following result is the key to the proof
of Proposition \ref{prop:continuous_right_inverses_Frechet} concerning
the existence of continuous sections for surjections of cones onto
normed spaces.
\begin{thm}
\label{thm:selection_theorem}\textup{(}Michael's Selection Theorem
\citep[Theorem  17.66]{AliprantisBorder}\textup{)} Let $\varphi:A\twoheadrightarrow Y$
be a correspondence from a paracompact space A into a real or complex
Fr\'echet space $Y$. If $\varphi$ is lower hemicontinuous and has
non-empty closed convex values, then it admits a continuous selector.
\end{thm}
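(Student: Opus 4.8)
The statement is the classical selection theorem of Michael, so rather than anything specific to cones the plan is to reconstruct its standard proof, whose two ingredients are a partition-of-unity approximation step (using paracompactness of $A$ and convexity of the values) and a completeness-based iteration (using that $Y$ is a Fr\'echet space and that the values are closed). First I would reduce to the case that $Y$ is a \emph{real} Fr\'echet space: a complex Fr\'echet space is in particular a real one with exactly the same topology, the same convex subsets and the same closed subsets, and a selector is only required to be a continuous function, so nothing is lost. I would then fix an increasing sequence of seminorms $(p_{k})_{k\geq1}$ generating the topology of $Y$, together with the associated translation-invariant complete metric $d_{Y}$, writing $B_{k}(y,r):=\{z\in Y:p_{k}(z-y)<r\}$ for the (convex) seminorm balls.

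The heart is the following approximation lemma, which I would prove first: if $\psi:A\twoheadrightarrow Y$ is lower hemicontinuous with non-empty convex (not necessarily closed) values, then for each index $k$ and each $\varepsilon>0$ there is a continuous $f:A\to Y$ with $\psi(a)\cap B_{k}(f(a),\varepsilon)\neq\emptyset$ for every $a\in A$. To see this, for each $y\in Y$ set $V_{y}:=\{a\in A:\psi(a)\cap B_{k}(y,\varepsilon)\neq\emptyset\}$; lower hemicontinuity makes each $V_{y}$ open, and since the values are non-empty the family $\{V_{y}\}_{y\in Y}$ covers $A$. Paracompactness yields a locally finite partition of unity $(\lambda_{j})_{j\in J}$ subordinate to this cover, with $\mathrm{supp}\,\lambda_{j}\subseteq V_{y_{j}}$ for suitable $y_{j}$. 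Setting $f(a):=\sum_{j}\lambda_{j}(a)\,y_{j}$ gives a continuous map, and for fixed $a$ each index $j$ occurring with $\lambda_{j}(a)>0$ satisfies $a\in V_{y_{j}}$, so one may pick $z_{j}\in\psi(a)$ with $p_{k}(y_{j}-z_{j})<\varepsilon$; convexity of $\psi(a)$ then places $\sum_{j}\lambda_{j}(a)\,z_{j}$ in $\psi(a)$, and the triangle inequality for $p_{k}$ shows it lies in $B_{k}(f(a),\varepsilon)$.

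Next I would bootstrap this into an exact selector by the usual telescoping construction. Starting from an $f_{1}$ produced by the lemma, I would at stage $n$ replace $\varphi$ by the correspondence $a\mapsto\varphi(a)\cap B_{d_{Y}}(f_{n}(a),2^{-n})$, where the ball is measured in the fixed complete metric. This intersected correspondence has non-empty values by the previous estimate, has convex values, and is again lower hemicontinuous; establishing this last point is a short but essential sublemma, asserting that intersecting a lower hemicontinuous convex-valued correspondence with an open ball of fixed radius about a continuous map preserves lower hemicontinuity. Applying the approximation lemma to it with tolerance $2^{-(n+1)}$ produces $f_{n+1}$, and the construction forces both $d_{Y}(f_{n+1}(a),\varphi(a))<2^{-(n+1)}$ and a uniform Cauchy estimate on $(f_{n})$. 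Completeness of $Y$ then gives a continuous uniform limit $f$, and since $d_{Y}(f(a),\varphi(a))=0$ while $\varphi(a)$ is closed, $f(a)\in\varphi(a)$ for all $a$, so $f$ is the desired continuous selector.

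The main obstacle is the iteration in the full Fr\'echet generality rather than the Banach case: the balls of the defining metric $d_{Y}$ of a Fr\'echet space need not be convex, so the approximation lemma must be run seminorm by seminorm (as above, with the convex $B_{k}$) while the telescoping scheme must be organized so that convergence in each seminorm, and hence in $d_{Y}$, is controlled simultaneously and so that the shrinking metric balls still intersect the values in non-empty convex sets. Carefully interleaving the seminorm index $k$ with the stage $n$, and verifying the lower-hemicontinuity sublemma for intersections with metric balls, is where the real work lies; the paracompactness and convexity inputs, by contrast, enter only through the clean approximation lemma above.
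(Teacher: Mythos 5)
There is nothing in the paper to compare your argument against: the paper does not prove this statement, but quotes it verbatim from Aliprantis and Border \citep[Theorem 17.66]{Aliprantis} and uses it as a black box (in the proof of Proposition \ref{prop:continuous_right_inverses_Frechet}). So what you have written is a reconstruction of the textbook proof of Michael's theorem, and it should be judged on its own. Your overall architecture is the standard and correct one: the reduction to real scalars is harmless, and your approximation lemma --- cover $A$ by the open sets $V_{y}=\{a\in A:\psi(a)\cap B_{k}(y,\varepsilon)\neq\emptyset\}$, take a locally finite subordinate partition of unity, and use convexity of the values to see that the resulting combination $\sum_{j}\lambda_{j}(a)y_{j}$ lies within $\varepsilon$ (in $p_{k}$) of the point $\sum_{j}\lambda_{j}(a)z_{j}\in\psi(a)$ --- is exactly right.

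The gap is in the iteration. You define the stage-$n$ correspondence as $a\mapsto\varphi(a)\cap B_{d_{Y}}(f_{n}(a),2^{-n})$ and assert that it has convex values; in a Fr\'echet space the balls of the translation-invariant metric $d_{Y}$ need not be convex, so this assertion is false in general, and without convex values your approximation lemma cannot be applied to the intersected correspondence. You do flag this obstacle in your closing paragraph, but you leave its resolution (``carefully interleaving the seminorm index with the stage'') entirely unexecuted, and that interleaving is precisely the only point where the Fr\'echet case differs from the Banach case --- it is the real content of the step, not an afterthought. The standard repair is to abandon metric balls altogether: set $U_{n}:=\{y\in Y:p_{n}(y)<2^{-n}\}$. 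These sets are open, convex and balanced; since the $p_{n}$ increase and generate the topology, they form a neighborhood base at $0$ satisfying $U_{m}\subseteq U_{n}$ for $m\geq n$ and $U_{n+1}+U_{n+1}\subseteq U_{n}$. Maintain the invariant that $f_{n}$ is continuous with $\varphi(a)\cap(f_{n}(a)+U_{n+1})\neq\emptyset$ for all $a$ (stage $1$ is your lemma with $k=2$, $\varepsilon=2^{-2}$), and at stage $n+1$ apply your lemma, with seminorm $p_{n+2}$ and tolerance $2^{-(n+2)}$, to $\varphi_{n+1}(a):=\varphi(a)\cap(f_{n}(a)+U_{n+1})$: its values are convex (intersection of convex sets), nonempty by the invariant, and lower hemicontinuity holds by your sublemma, which only needs that $a\mapsto f_{n}(a)+U_{n+1}$ has open graph. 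The resulting $f_{n+1}$ satisfies $f_{n+1}(a)\in f_{n}(a)+U_{n+1}+U_{n+2}\subseteq f_{n}(a)+U_{n}$, so $(f_{n})$ is uniformly Cauchy because the $U_{n}$ form a base, and the continuous limit $f$ satisfies $f(a)\in\bigcap_{n}\overline{\varphi(a)+U_{n}}=\overline{\varphi(a)}=\varphi(a)$ by closedness of the values. With that substitution your proof is complete.
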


\section{Main results\label{sec:main_results}}

In this section we establish our main results, Theorems \ref{thm:open_mapping_theorem},
\ref{thm:continuous_right_inverses_exists} and \ref{thm:improving_a_family_of_right_inverses_is_possible}.
Theorem \ref{thm:open_mapping_theorem} is an Open Mapping Theorem
for surjections from complete metric cones onto Banach spaces; its
proof is based on the usual Open Mapping Theorem. Together with the
technical Proposition \ref{prop:continuous_right_inverses_Frechet}
(based on Michael's Selection Theorem) it yields the key Proposition
\ref{prop:key_proposition}. This is then reworked into two more practical
results. The first of these, Theorem \ref{thm:continuous_right_inverses_exists},
guarantees the existence of continuous bounded positively homogeneous
right inverses, while the second, Theorem \ref{thm:improving_a_family_of_right_inverses_is_possible},
shows that the existence of a family of possibly ill-behaved local
right inverses implies the existence of a family well-behaved global
ones.

As before, if $X$ is a normed space, then $S_{X}:=\{x\in X:\Vert x\Vert=1\}$
is its unit sphere.

We start with the core of the proof of the Open Mapping Theorem, which
employs a certain Minkowski functional. The use of such functionals
when dealing with cones and Banach spaces goes back to Klee \citep{Klee}
and And\^ o \citep{Ando}. 
\begin{prop}
\label{prop:equivalent_norm}Let $(C,d)$ be a complete metric cone
as in Definition \ref{def:abstract_cone}. Let $X$ be a real Banach
space and $T:C\to X$ a continuous additive positively homogeneous
surjection. Let $B:=\{c\in C:d(0,c)\leq1\}$ denote the closed unit
ball around zero in $C$, and define $V:=T(B)\cap(-T(B))$. Then $V$
is an absorbing convex balanced subset of $X$ and its Minkowski functional
$\|\cdot\|_{V}:X\to\mathbb{R}$, given by $\|x\|_{V}:=\inf\{\lambda>0:x\in\lambda V\}$,
for $x\in X$, is a norm on $X$ that is equivalent to the original
norm on $X$.\end{prop}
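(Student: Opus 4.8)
The plan is to split the claim into the purely algebraic/convexity properties of $V$, the easy ``lower'' norm estimate, and the genuinely analytic ``upper'' estimate that $V$ contains a ball around $0$; only the last uses completeness and Baire category and is the heart of the matter. First I would record the structural properties. Using \eqref{eq:positive_homogeneity}, \eqref{eq:sub_translation_invariance} and the triangle inequality one checks that $B$ is convex: for $c_{1},c_{2}\in B$ and $0\le t\le1$ one has $d(0,tc_{1}+(1-t)c_{2})\le t\,d(0,c_{1})+(1-t)\,d(0,c_{2})\le1$. Since $T$ is additive and positively homogeneous, $T(tc_{1}+(1-t)c_{2})=tTc_{1}+(1-t)Tc_{2}$, so $T(B)$ is convex; hence $-T(B)$ and $V=T(B)\cap(-T(B))$ are convex, and $0=T0\in V$. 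Moreover $-V=(-T(B))\cap T(B)=V$, so $V$ is symmetric, and a convex symmetric set containing $0$ is balanced. Both $T(B)$ and $-T(B)$ are star-shaped with respect to $0$, and they are absorbing because $\bigcup_{n}nT(B)=T(\bigcup_{n}nB)=T(C)=X$ by surjectivity (and similarly for $-T(B)$); Lemma \ref{lem:intersection_star_shaped_absorbing} then gives that $V$ is star-shaped and absorbing. Thus $V$ is absorbing, convex and balanced, so $\|\cdot\|_{V}$ is a seminorm.

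Next I would establish the lower comparison. By Lemma \ref{lem:elementary_metric_cone_results}(2), $T$ maps the metrically bounded set $B$ to a norm-bounded set, so $V\subseteq T(B)\subseteq\{x:\|x\|\le R\}$ for some $R>0$. This yields $\|x\|\le R\|x\|_{V}$ for all $x\in X$, which simultaneously proves the lower estimate and shows that $\|\cdot\|_{V}$ is a genuine norm, since $\|x\|_{V}=0$ then forces $x=0$.

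The remaining and hardest step is to produce $r>0$ with $rB_{X}\subseteq V$ up to a fixed factor, which gives the upper estimate. I would first invoke Baire category: since $V$ is absorbing, $X=\bigcup_{n}n\overline{V}$, so the closed set $\overline{V}$ has an interior point $v_{0}$; as $\overline{V}$ is convex and symmetric, $-v_{0}$ is also interior and averaging yields $0$ in the interior, i.e.\ $rB_{X}\subseteq\overline{V}$ for some $r>0$. Then comes the completeness bootstrap. Given $x$ with $\|x\|<r$, successive approximation produces $v_{i}\in2^{-(i-1)}V$ with $\|x-\sum_{i=1}^{n}v_{i}\|<r/2^{n}$. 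Writing $v_{i}=Tc_{i}$ with $c_{i}\in2^{-(i-1)}B$, the partial sums of $\sum c_{i}$ are Cauchy in $C$: by \eqref{eq:sub_translation_invariance} and Lemma \ref{lem:elementary_metric_cone_results}(1) their mutual distances are bounded by the tail of $\sum2^{-(i-1)}$. By completeness of $C$ they converge to some $c\in2B$ with $Tc=x$ by continuity of $T$; running the same argument with $-v_{i}\in2^{-(i-1)}T(B)$ gives $-x\in2T(B)$ as well, so that $x\in2V$. Hence $\{x:\|x\|<r\}\subseteq2V$, giving $\|x\|_{V}\le(2/r)\|x\|$, and combining with the lower estimate shows $\|\cdot\|_{V}$ is equivalent to the original norm.

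I expect the main obstacle to be this final bootstrap: one must run the classical open-mapping successive-approximation argument inside the metric cone, deriving summability of $\sum c_{i}$ purely from \eqref{eq:positive_homogeneity}, \eqref{eq:sub_translation_invariance} and completeness of $C$, and carry it out simultaneously on the $T(B)$- and $(-T(B))$-sides, so that the limit lands in $V$ rather than merely in $T(B)$. The symmetrization built into $V$ is exactly what makes this delicate and is the reason $V$ is defined as an intersection.
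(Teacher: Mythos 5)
Your proof is correct, but for the crucial equivalence-of-norms step it takes a genuinely different route from the paper. The structural part ($V$ absorbing, convex, balanced, hence $\|\cdot\|_{V}$ a seminorm) and the lower estimate $\Vert x\Vert\leq M\Vert x\Vert_{V}$ via Lemma \ref{lem:elementary_metric_cone_results}(2) coincide with the paper's argument. For the upper estimate, however, the paper never invokes Baire category directly: it shows that $(X,\|\cdot\|_{V})$ is \emph{complete}, by taking a $\|\cdot\|_{V}$-absolutely convergent series $\sum_i x_i$, writing $x_{i}=2\Vert x_{i}\Vert_{V}x_{i}^{\prime}$ with $x_{i}^{\prime}\in V$, and pulling the series back through $T$ into $C$ on both the $T(B)$ and $-T(B)$ sides (exactly as you do, via \eqref{eq:sub_translation_invariance}, Lemma \ref{lem:elementary_metric_cone_results}(1) and completeness of $(C,d)$, so that the tails land in multiples of $V$); equivalence then follows in one stroke from the Bounded Inverse Theorem applied to the identity map between $(X,\|\cdot\|_{V})$ and $(X,\|\cdot\|)$. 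You instead inline the classical open-mapping machinery by hand: Baire category on $X=\bigcup_{n}n\overline{V}$ to obtain $rB_{X}\subseteq\overline{V}$, followed by the geometric successive-approximation bootstrap with $v_{i}\in2^{-(i-1)}V$, again carried out simultaneously on both sides of $V$, yielding $\{x:\|x\|<r\}\subseteq2V$ and the explicit bound $\|x\|_{V}\leq(2/r)\|x\|$. Both arguments rest on the same cone-side pullback, so they are cognate at the core; the difference is where category enters. The paper's route is shorter and cleaner because it uses the Banach-space Bounded Inverse Theorem as a black box (in line with its stated philosophy that the classical Open Mapping Theorem serves as a stepping stone for the generalization), while your route is self-contained modulo Baire, produces an explicit constant, and makes visible exactly why $V$ must be defined as the symmetric intersection $T(B)\cap(-T(B))$ -- a point the paper's completeness proof exploits in the same two-sided way but less conspicuously. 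One small caution: your claims $\bigcup_{n}nB=C$ and $c\in2B$ silently use the exact homogeneity \eqref{eq:positive_homogeneity} of $d$ at $0$; this is fine, but worth stating, since for a general metric it would fail.
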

\begin{proof}
It follows from Lemma \ref{lem:elementary_metric_cone_results} and
Definition \ref{def:metric-cone} that $B:=\{c\in C:d(0,c)\leq1\}$
is convex. Hence $T(B)$ is convex and contains zero, since $T$ is
additive and positively homogeneous. Since $0\in T(B)$, its convexity
implies that $T(B)$ is star-shaped with respect to 0. Furthermore,
$T(B)$ is absorbing, as a consequence of the surjectivity and positive
homogeneity of $T$ and (\ref{eq:positive_homogeneity}). Thus $T(B)$
is star-shaped with respect to 0 and absorbing, and since this implies
the same properties for $-T(B)$, Lemma \ref{lem:intersection_star_shaped_absorbing}
shows that $V:=T(B)\cap(-T(B))$ is star-shaped with respect to 0
and absorbing. As $V$ is clearly symmetric, its star-shape with respect
to 0 implies that it is balanced. Furthermore, the convexity of $T(B)$
implies that $V$ is convex. All in all, $V$ is an absorbing convex
balanced subset of the real vector space $X$, and hence its Minkowski
functional $\|\cdot\|_{V}$ is a seminorm by \citep[Theorem 1.35]{Rudin}.
Because $T$ is continuous at 0, Lemma \ref{lem:elementary_metric_cone_results}
implies that $\sup_{y\in V}\|y\|\leq M$ for some $M>0$. If $x\in X$
and $\lambda>\Vert x\Vert_{V},$ then the definition of $\|\cdot\|_{V}$
and the star-shape of $V$ with respect to 0 imply that $x\in\lambda V$,
so that $\|x\|\leq\lambda M$ . Hence
\begin{equation}
\Vert x\Vert\leq M\Vert x\Vert_{V}\quad(x\in X).\label{eq:inequality_between_norms}
\end{equation}
We conclude that $\|\cdot\|_{V}$ is a norm on $X$. In view of (\ref{eq:inequality_between_norms}),
the equivalence of $\|\cdot\|_{V}$ and $\|\cdot\|$ is an immediate
consequence of the Bounded Inverse Theorem for Banach spaces, once
we know that $(X,\|\cdot\|_{V})$ is complete. We will now proceed
to show this, using the completeness of $(C,d)$.

To this end, it suffices to show $\|\cdot\|_{V}$-convergence of all
$\|\cdot\|_{V}$-absolutely convergent series. Let $\{x_{i}\}_{i=1}^{\infty}$
be a sequence in $X$ such that $\sum_{i=1}^{\infty}\|x_{i}\|_{V}<\infty$.
Since $\|x\|\leq M\|x\|_{V}$ for all $x\in X$, $\sum_{i=1}^{\infty}\|x_{i}\|<\infty$
also holds, hence by completeness of $X$ the $\|\cdot\|$-sum $x_{0}:=\sum_{i=1}^{\infty}x_{i}$
exists. We claim that $\left\Vert x_{0}-\sum_{i=1}^{N-1}x_{i}\right\Vert _{V}\to0$
as $N\to\infty$, i.e., that $x_{0}$ is also the $\|\cdot\|_{V}$-sum
of this series.

In order to establish this, we start by noting that, for $x\in X$,
there exists $x^{\prime}\in V$ such that $x=2\Vert x\Vert_{V}x^{\prime}.$
This is clear if $\Vert x\Vert_{V}=0$. If $\Vert x\Vert_{V}\neq0$,
then $2\Vert x\Vert_{V}>\Vert x\Vert_{V}$ and, as already observed
earlier, this implies that $x\in2\Vert x\Vert_{V}V$. Therefore for
every $i\in\mathbb{N}$ there exists $x_{i}^{\prime}\in V$ satisfying
$x_{i}=2\|x_{i}\|_{V}x_{i}^{\prime}$. Since $V\subset T(B)$, for
every $i\in\mathbb{N}$ there exists $b_{i}\in B$ such that $Tb_{i}=x_{i}^{\prime}$,
so that $x_{i}=2\Vert x_{i}\Vert_{V}Tb_{i}=T(2\Vert x_{i}\Vert_{V}b_{i})$.
Note that $d(0,2\|x_{i}\|_{V}b_{i})=2\|x_{i}\|_{V}d(0,b_{i})\leq2\|x_{i}\|_{V}$. 

From (\ref{eq:sub_translation_invariance} and Lemma \ref{lem:elementary_metric_cone_results})
it then follows that, for any fixed $N\in\mathbb{N}$ and all $n,m\in\mathbb{N}$
with $N\leq m\leq n$, 
\begin{eqnarray*}
d\left(\sum_{i=N}^{m}2\|x_{i}\|_{V}b_{i},\sum_{i=N}^{n}2\|x_{i}\|_{V}b_{i}\right) & \leq & d\left(0,\sum_{i=m+1}^{n}2\|x_{i}\|_{V}b_{i}\right)\\
 & \leq & \sum_{i=m+1}^{n}d(0,2\Vert x_{i}\Vert_{V}b_{i})\\
 & \leq & \sum_{i=m+1}^{n}2\|x_{i}\|_{V}.
\end{eqnarray*}
 We conclude that, for any fixed $N\in\mathbb{N}$, $\left\{ \sum_{i=N}^{n}2\|x_{i}\|_{V}b_{i}\right\} _{n=N}^{\infty}$
is a Cauchy sequence in $(C,d)$ and hence, by completeness, converges
to some $c_{N}\in C$. Using Lemma \ref{lem:elementary_metric_cone_results}
again we find that 
\[
d(0,c_{N})=\lim_{n\to\infty}d\left(0,\sum_{i=N}^{n}2\|x_{i}\|_{V}b_{i}\right)\leq\limsup_{n\to\infty}\sum_{i=N}^{n}d(0,2\Vert x_{i}\Vert_{V}b_{i})\leq\sum_{i=N}^{\infty}2\|x_{i}\|_{V},
\]
so that $c_{N}\in\left(\sum_{i=N}^{\infty}2\|x_{i}\|_{V}\right)B$,
as a consequence of (\ref{eq:positive_homogeneity}). By the continuity,
additivity and positive homogeneity of $T$, $Tc_{N}=\lim_{n\to\infty}T\left(\sum_{i=N}^{n}2\|x_{i}\|_{V}b_{i}\right)=\lim_{n\to\infty}\sum_{i=N}^{n}T(2\|x_{i}\|_{V}b_{i})=\lim_{n\to\infty}\sum_{i=N}^{n}x_{i}=\sum_{i=N}^{\infty}x_{i}$
with respect to the $\|\cdot\|$-topology. Hence 
\[
x_{0}-\sum_{i=1}^{N-1}x_{i}=\sum_{i=N}^{\infty}x_{i}=Tc_{N}\in\left(\sum_{i=N}^{\infty}2\|x_{i}\|_{V}\right)T(B).
\]
Similarly, the inclusion $V\subset-T(B)$ implies that, for every
$i\in\mathbb{N},$ there exists $\tilde{b}_{i}\in B$ such that $-T\tilde{b}_{i}=x_{i}^{\prime}$.
Then $\tilde{c}_{N}=\lim_{n\to\infty}\sum_{i=N}^{n}2\Vert x_{i}\Vert_{V}\tilde{b}_{i}$
exists for all $N\in\mathbb{N}$, $\tilde{c}_{N}\in\left(\sum_{i=N}^{\infty}2\|x_{i}\|_{V}\right)B$,
and $T\tilde{c}_{N}=-\sum_{i=N}^{\infty}x_{i}$, so that 
\[
x_{0}-\sum_{i=1}^{N-1}x_{i}=\sum_{i=N}^{\infty}x_{i}=-T\tilde{c}_{N}\in\left(\sum_{i=N}^{\infty}2\|x_{i}\|_{V}\right)(-T(B)).
\]
We conclude that $x_{0}-\sum_{i=1}^{N-1}x_{i}\in\left(\sum_{i=N}^{\infty}2\|x_{i}\|_{V}\right)V$.
Therefore, 
\[
\left\Vert x_{0}-\sum_{i=1}^{N-1}x_{i}\right\Vert _{V}\leq\sum_{i=N}^{\infty}2\|x_{i}\|_{V}\to0
\]
as $N\to\infty$, and hence $(X,\|\cdot\|_{V})$ is complete.
\end{proof}
The Open Mapping Theorem is now an easy consequence.
\begin{thm}
\label{thm:open_mapping_theorem}\textup{(}Open Mapping Theorem\textup{)}
Let $(C,d)$ be a complete metric cone as in Definition \ref{def:abstract_cone};
for example, $C$ could be a closed not necessarily proper cone in
a Banach space. Let $X$ be a real or complex Banach space and $T:C\to X$
a continuous additive positively homogeneous map. Then the following
are equivalent:
\begin{enumerate}
\item $T$ is surjective;
\item There exists some constant $K>0$ such that, for every $x\in X$,
there exists some $c\in C$ with \textup{$x=Tc$} and $d(0,c)\leq K\|x\|$;
\item $T$ is an open map;
\item 0 is an interior point of T(C).
\end{enumerate}
\end{thm}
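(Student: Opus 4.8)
The plan is to establish the cycle $(1)\Rightarrow(2)\Rightarrow(3)\Rightarrow(4)\Rightarrow(1)$. As the preceding discussion indicates, essentially all the work has already been done in Proposition \ref{prop:equivalent_norm}, and the present theorem is mostly packaging. Since a complex Banach space is in particular a real Banach space, and $T$ is additive and positively homogeneous with respect to nonnegative \emph{real} scalars only, I would at the outset regard $X$ as a real Banach space, so that Proposition \ref{prop:equivalent_norm} is directly applicable in both the real and complex cases.

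For $(1)\Rightarrow(2)$, I would invoke Proposition \ref{prop:equivalent_norm} to obtain the norm $\|\cdot\|_{V}$ associated with $V=T(B)\cap(-T(B))$, equivalent to the original norm; in particular there is a constant $M'>0$ with $\|x\|_{V}\le M'\|x\|$ for all $x\in X$. Given $x\in X$, the case $x=0$ is handled by $c=0$, and for $x\ne0$ one has $2\|x\|_{V}>\|x\|_{V}$, so the star-shape of $V$ with respect to $0$ yields $x\in2\|x\|_{V}V$. Because $V\subseteq T(B)$ and $T$ is positively homogeneous, this gives $b\in B$ with $x=T(2\|x\|_{V}b)$; setting $c:=2\|x\|_{V}b$ and using \eqref{eq:positive_homogeneity} together with $d(0,b)\le1$ produces $d(0,c)=2\|x\|_{V}\,d(0,b)\le2M'\|x\|$, so that $(2)$ holds with $K:=2M'$.

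For $(2)\Rightarrow(3)$, which is the only implication where the cone structure genuinely enters, I would show directly that $T(U)$ is open for every open $U\subseteq C$. Fix $c_{0}\in U$ and choose $r>0$ with $\{c\in C:d(c_{0},c)<r\}\subseteq U$. For $x\in X$ with $\|x-Tc_{0}\|<r/K$, apply $(2)$ to $x-Tc_{0}$ to obtain $c'\in C$ with $Tc'=x-Tc_{0}$ and $d(0,c')\le K\|x-Tc_{0}\|<r$. Then $T(c_{0}+c')=x$ by additivity, and here the sub-translation invariance \eqref{eq:sub_translation_invariance} is essential: it gives $d(c_{0},c_{0}+c')\le d(0,c')<r$, so that $c_{0}+c'\in U$ and hence $x\in T(U)$. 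Thus $T(U)$ contains a ball of radius $r/K$ about $Tc_{0}$, proving openness.

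Finally, the two remaining implications are immediate. For $(3)\Rightarrow(4)$, the whole space $C$ is open in itself, so $T(C)$ is open in $X$ and contains $T(0)=0$, making $0$ an interior point. For $(4)\Rightarrow(1)$, I would pick $\varepsilon>0$ with $\{x\in X:\|x\|\le\varepsilon\}\subseteq T(C)$ and, for nonzero $x\in X$, note that $\varepsilon\|x\|^{-1}x$ lies in this ball, hence equals $Tc$ for some $c\in C$; positive homogeneity then gives $x=T(\varepsilon^{-1}\|x\|\,c)$, so $T$ is surjective. The one point demanding any care is the reliance on \eqref{eq:sub_translation_invariance} in $(2)\Rightarrow(3)$, which reflects the earlier remark that sub-translation invariance, rather than full translation invariance, is exactly what the argument needs; everything else is routine scaling once Proposition \ref{prop:equivalent_norm} is in hand, which is where the genuine difficulty of the theorem resides.
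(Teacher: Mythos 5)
Your proposal is correct and follows essentially the same route as the paper: reduce to the real case, obtain (1)$\Rightarrow$(2) from Proposition \ref{prop:equivalent_norm} via the star-shape of $V$ and the equivalence of $\|\cdot\|_{V}$ with the original norm, prove (2)$\Rightarrow$(3) by translating a small ball in $C$ using additivity and the sub-translation invariance \eqref{eq:sub_translation_invariance}, and dispose of (3)$\Rightarrow$(4)$\Rightarrow$(1) by scaling. The only cosmetic difference is that you scale by $2\|x\|_{V}$ and then bound by the norm equivalence, whereas the paper scales directly by $2L\|x\|$; the substance is identical.
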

\begin{proof}
Given the nature of the statements in (1)-(4), we may assume that
$X$ is a real Banach space, by viewing a complex one as such if necessary.
We first prove that (1) implies (2). By Proposition \ref{prop:equivalent_norm},
there exists a constant $L>0$ such that $\Vert x\Vert_{V}\leq L\Vert x\Vert$,
for all $x\in X$. If $\Vert x\Vert\neq0,$ then $2L\Vert x\Vert>L\Vert x\Vert\geq\Vert x\Vert_{V}$.
Hence $x\in2L\Vert x\Vert V$, which is also trivially true if $x=0$.
In particular, for all $x\in X$ there exists some $c\in B$ such
that $x=2L\Vert x\Vert T(c)$. Then $x=T(2L\Vert x\Vert c)$, and
$d(0,2L\Vert x\Vert c)=2L\Vert x\Vert d(0,c)\leq2L\Vert x\Vert$. 

Next, we prove that (2) implies (3). Let $U\subseteq C$ be an open
set, and let $x\in T(U)$ be arbitrary with $b\in U$ satisfying $Tb=x$.
Since $U$ is open, there exists some $r>0$ such that $W:=\{c\in C:d(b,c)<r\}$
is contained in $U$. We define $W':=\{c\in C:d(0,c)<r\}$. Then $b+W'\subseteq W$,
since $d(b,b+w^{\prime})\leq d(0,w^{\prime})<r$ for all $w^{\prime}\in W'$.
Now, by hypothesis, for every $x'\in X$ with $\|x'\|<rK^{-1}$, there
exists some $w^{\prime}\in W'$ with $Tw^{\prime}=x'$. With $B_{X}:=\{x\in X:\|x\|<1\}$,
by additivity of $T$, it follows that $x+rK^{-1}B_{X}\subseteq T(b+W')\subseteq T(W)\subseteq T(U)$.
We conclude that $T(U)$ is open.

That (3) implies (4) is trivial, and (4) implies (1) by the positive
homogeneity of $T$.\end{proof}
\begin{rem}
{}
\begin{enumerate}
\item Since a real or complex Banach space is a complete metric cone, Theorem
\ref{thm:open_mapping_theorem} generalizes the Open Mapping Theorem
for Banach spaces that was used in the proof of the preparatory Proposition
\ref{prop:equivalent_norm}.
\item If $C$ is a closed cone in a Banach space, $X$ is a topological
vector space, and $T:C\to X$ is continuous, additive and positively
homogeneous, then we can conclude that $T$ is an open map, provided
that we know beforehand that the closure of $\{Tc:c\in C,\|c\|\leq1\}$
is a neighborhood of 0 in $X$. This follows from \citep[Theorem 1]{NgOpenMapping}
. Since we do not have such a hypothesis, this result does not imply
ours. The difference is not only that in our case $T$ is assumed
to be surjective, but, more fundamentally, that our image space is
in fact a Banach space, for which an Open Mapping Theorem is already
known to hold that serves as a stepping stone for the more general
result.
\item In \citep{Valero} an Open Mapping Theorem is established for maps
between two abstract cones in a certain class, provided that we know
beforehand that these maps satisfy a so-called almost-openness condition.
Since we do not have such a hypothesis, this result does not imply
ours. Again the difference lies in the image space: in our context
this is not just a cone, but actually a full Banach space with accompanying
Open Mapping Theorem.

\item A result in the vein of our Theorem \ref{thm:open_mapping_theorem} is the following result of Klee's \citep[Theorem 3.2]{Klee}: Suppose $E$ and $F$ are metrizable topological vector spaces, $C$ is a convex cone in $E$, and $T$ is a continuous additive transformation of $C$ onto $F$ such that $T(U\cap C)=-T(U\cap C)  $ for each member $U$ of a neighborhood basis $\mathcal U$ of $0$ in $E$. Suppose $F$ is of the second category and $C$ is complete. Then $T$\ is open. 
Although similar in nature, it does not imply our Theorem \ref{thm:open_mapping_theorem}, which does not require an enveloping space of the cone or, more importantly, a symmetry property for $T$. Neither does our Theorem \ref{thm:open_mapping_theorem} imply Klee's result, not even if $T$ is additionally required to be positively homogeneous, because, as is well known, the compatible metric on a metrizable topological vector spaces can be assumed to be translation invariant, but there is no guarantee for the homogeneity condition (\ref{eq:positive_homogeneity}) of Definition \ref{def:metric-cone} to hold.

Parenthetically let us remark that, interestingly enough, the symmetry condition in Klee's result is actually equivalent to $T$ being continuous and open. This follows from that result and, for the converse implication, the following observation in a general topological context: Let $E$ be a topological space with subspace $C$, and let $T:C\to F$ be a continuous open (not necessarily surjective) map into  a topological space $F$. Let $p\in C$ and suppose $\sigma:  F\to F$ is a homeomorphism of $F$ of finite order such that $\sigma(T(p))=T(p)$. Then there exists a neighborhood basis $\mathcal U$ of $p$ in $E$ such that $\sigma(T(U\cap C))=T(U\cap C)$, for all $U\in\mathcal U$. To see this, let $V$ be a neighborhood of $p$\ in $E$. Since $T$ is open, $T(V\cap C)$ is a neighborhood of $T(p)$. Let $n$\ be the order of $\sigma$ and put $W=\bigcap_{i=0}^{n-1}\sigma^i(T(V\cap C))$. Then $W$ is a neighborhood of $T(p)$, $W\subseteq T(V\cap C)$\  and $\sigma(W)=W$. Since $T$ is continuous, $T^{-1}(W)$ is a neighborhood of $p$\ in $C$. Let $U^\prime$ be a neighborhood of $p$\ in $E$ such that $U^\prime\cap C=T^{-1}(W)$. Put $U=U^\prime\cap V$, so that $U$ is a neighborhood of $p$ in $E$ and $U\subseteq V$. Then $T(U\cap C)=T(U^\prime\cap V\cap C)=T(T^{-1}(W)\cap V\cap C)\subseteq T(T^{-1}(W))\cap T(V\cap C)\subseteq W\cap T(V\cap C)=W$, where the final equality holds since $W\subseteq T(V\cap C)$ by construction. The latter relation also implies that $T(U\cap C)=T(T^{-1}(W)\cap V\cap C)\supseteq W$. Hence $T(U\cap C)=W$ is mapped into and onto itself by $\sigma$, and we conclude that the collection of all $U$ thus obtained forms a neighborhood basis $\mathcal U$ of $p$\ in $E$ as required.

\end{enumerate}
\end{rem}
We will now proceed with the second basic result, Proposition \ref{prop:continuous_right_inverses_Frechet},
which is concerned with families of continuous right inverses for
a surjective (this follows from the hypotheses) map. 
\begin{prop}
\label{prop:continuous_right_inverses_Frechet}Let $X$ be a real
or complex normed space and let $Y$ be a real or complex topological
vector space, not necessarily Hausdorff and not necessarily over the
same field as $X$, with $C\subseteq Y$ a closed not necessarily
proper cone. Let $I$ be a finite set, possibly empty. For each $i\in I$,
let $\alpha_{i}\in\mathbb{R}$ and let $\rho_{i}:C\to\mathbb{R}$
be a continuous subadditive positively homogeneous map. 

Suppose that $T:C\to X$ is a continuous additive positively homogeneous
map with the property that, for every $\varepsilon>0$, there exists
a map $\sigma_{\varepsilon}:S_{X}\to C$, such that:
\begin{enumerate}
\item $T\circ\sigma_{\varepsilon}=\mbox{id}_{S_{X}}$;
\item $\rho_{i}(\sigma_{\varepsilon}(x))\leq\alpha_{i}+\varepsilon$, for
all $x\in S_{X}$ and $i\in I$;
\item $\sigma_{\varepsilon}(S_{X})$ is bounded in $Y$.
\end{enumerate}
Then, for every $\varepsilon>0$, the correspondence $\varphi_{\varepsilon}:S_{X}\twoheadrightarrow Y$,
defined by 
\[
\varphi_{\varepsilon}(x):=\{y\in C:Ty=x,\,\rho_{i}(y)\leq\alpha_{i}+\varepsilon\mbox{ for all }i\in I\}\quad(x\in S_{X}),
\]
has non-empty closed convex values, and is lower hemicontinuous on
$S_{X}$. 

If $Y$ is a Fr\'echet space, there exist continuous maps $\sigma_{\varepsilon}^{\prime}:S_{X}\to C$,
for all $\varepsilon>0$, satisfying:
\begin{enumerate}
\item [\textup{(}a\textup{)}]$T\circ\sigma_{\varepsilon}^{\prime}=\mbox{id}_{S_{X}}$;
\item [\textup{(}b\textup{)}]$\rho_{i}(\sigma_{\varepsilon}^{\prime}(x))\leq\alpha_{i}+\varepsilon$,
for all $x\in S_{X}$ and $i\in I$.
\end{enumerate}
If $\varepsilon>0$ and $\sigma_{\varepsilon}^{\prime}(S_{X})$ is
bounded in \textup{$Y$ }in the sense of topological vector spaces,
then $\sigma_{\varepsilon}^{\prime}$ can be extended to a continuous
positively homogeneous map $\sigma_{\varepsilon}^{\prime}:X\to C$
on the whole space, satisfying: 
\begin{enumerate}
\item [\textup{(}a\textup{)}]$T\circ\sigma_{\varepsilon}^{\prime}=\mbox{id}_{X};$ 
\item [\textup{(}b\textup{)}]$\rho_{i}(\sigma_{\varepsilon}^{\prime}(x))\leq(\alpha_{i}+\varepsilon)\|x\|$
, for all $x\in X$ and $i\in I$. 
\end{enumerate}
\end{prop}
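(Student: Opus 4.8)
The plan is to treat the three assertions in turn, with essentially all the real work concentrated in the lower hemicontinuity of $\varphi_\varepsilon$. First I would dispose of the non-emptiness, closedness and convexity of the values $\varphi_\varepsilon(x)$. Non-emptiness is immediate, since the hypotheses on $\sigma_\varepsilon$ give $\sigma_\varepsilon(x)\in\varphi_\varepsilon(x)$ for each $x\in S_X$. For closedness, $\varphi_\varepsilon(x)$ is the intersection of the closed cone $C$, the set $T^{-1}(\{x\})$ (closed since $T$ is continuous), and the sublevel sets $\{y\in C:\rho_i(y)\le\alpha_i+\varepsilon\}$ (closed since each $\rho_i$ is continuous). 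Convexity follows because $C$ is convex, $T^{-1}(\{x\})$ is affine, and each sublevel set is convex: if $\rho_i(y_1),\rho_i(y_2)\le\alpha_i+\varepsilon$, then subadditivity and positive homogeneity give $\rho_i(ty_1+(1-t)y_2)\le t\rho_i(y_1)+(1-t)\rho_i(y_2)\le\alpha_i+\varepsilon$ for $t\in[0,1]$. This part is routine.

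The heart of the matter is lower hemicontinuity, which I expect to be the main obstacle. Fix $x\in S_X$ and an open $U\subseteq Y$ with $y_0\in\varphi_\varepsilon(x)\cap U$; I must produce a neighbourhood of $x$ on which $\varphi_\varepsilon$ still meets $U$. The idea is to \emph{shrink and correct}: fix $\delta\in(0,\varepsilon)$, and for $x'\in S_X$ near $x$ set $\mu:=1-\eta$ with $\eta=\eta(x')>0$ small, and
\[
y'(x'):=\mu y_0+\|x'-\mu x\|\,\sigma_\delta\!\left(\tfrac{x'-\mu x}{\|x'-\mu x\|}\right).
\]
Here $x'-\mu x\neq0$ because $\|x'\|=1>\mu=\|\mu x\|$. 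By construction $y'(x')\in C$ and, using $T\circ\sigma_\delta=\mathrm{id}_{S_X}$, one gets $Ty'(x')=\mu x+(x'-\mu x)=x'$. For the constraints, subadditivity and positive homogeneity of $\rho_i$ together with $\rho_i(y_0)\le\alpha_i+\varepsilon$ and $\rho_i(\sigma_\delta(\,\cdot\,))\le\alpha_i+\delta$ give $\rho_i(y'(x'))\le\mu(\alpha_i+\varepsilon)+\|x'-\mu x\|(\alpha_i+\delta)$, so the desired bound $\rho_i(y'(x'))\le\alpha_i+\varepsilon$ reduces to $\|x'-\mu x\|(\alpha_i+\delta)\le\eta(\alpha_i+\varepsilon)$. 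Since $I$ is finite, $\delta<\varepsilon$ provides the slack, and — using $\eta-\|x'-x\|\le\|x'-\mu x\|\le\eta+\|x'-x\|$ — choosing $\eta(x')\to0$ more slowly than $\|x'-x\|$ (for instance $\eta(x')=\|x'-x\|^{1/2}$) makes this inequality hold for every $i$ once $x'$ is sufficiently close to $x$; a short case distinction on the sign of $\alpha_i+\varepsilon$ settles it. The same choice forces $y'(x')\to y_0$: indeed $\mu y_0\to y_0$, while $\|x'-\mu x\|\to0$ and boundedness of $\sigma_\delta(S_X)$ in $Y$ (hypothesis (3)) make the correction term tend to $0$ in the topological vector space $Y$. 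Hence $y'(x')\in\varphi_\varepsilon(x')\cap U$ for all $x'$ in a suitable neighbourhood of $x$, which is lower hemicontinuity.

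With the correspondence understood, the last two assertions are short. For the Fréchet case I would invoke Michael's Selection Theorem (Theorem \ref{thm:selection_theorem}): $S_X$ is metric, hence paracompact, $Y$ is a Fréchet space, and $\varphi_\varepsilon$ has non-empty closed convex values and is lower hemicontinuous, so a continuous selector $\sigma_\varepsilon':S_X\to C$ exists; membership $\sigma_\varepsilon'(x)\in\varphi_\varepsilon(x)$ yields (a) and (b). Finally, if $\sigma_\varepsilon'(S_X)$ is bounded in $Y$, I would extend radially by positive homogeneity, setting $\sigma_\varepsilon'(x):=\|x\|\,\sigma_\varepsilon'(x/\|x\|)$ for $x\neq0$ and $\sigma_\varepsilon'(0):=0$. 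This lands in $C$, is positively homogeneous, satisfies $T\circ\sigma_\varepsilon'=\mathrm{id}_X$ and $\rho_i(\sigma_\varepsilon'(x))\le(\alpha_i+\varepsilon)\|x\|$ by positive homogeneity, is continuous on $X\setminus\{0\}$ as a composition of continuous maps, and is continuous at $0$ precisely because $\|x\|\,\sigma_\varepsilon'(S_X)\to0$ in $Y$ as $x\to0$, again by boundedness.
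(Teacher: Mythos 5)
Your proof is correct, and while its skeleton (the routine verification of non-empty closed convex values, Michael's Selection Theorem on the paracompact metric space $S_X$, and the radial extension with continuity at $0$ coming from boundedness) necessarily coincides with the paper's, your argument for lower hemicontinuity --- the only non-routine step --- runs by a genuinely different mechanism. The paper first improves an arbitrary $y'\in\varphi_\varepsilon(x)\cap U$ to a point $y=t_0\sigma_{\varepsilon/2}(x)+(1-t_0)y'\in U$ with \emph{uniform strict} slack $\eta:=\min_{i\in I}(\alpha_i+\varepsilon-\rho_i(y))>0$, and then, for $x+x'\in S_X$ with $\|x'\|$ small, uses the purely additive perturbation $y+\|x'\|\sigma_{\eta/2}(x'/\|x'\|)$, which lies over $x+x'$; the constraint check there is sign-insensitive, the signs of the $\alpha_i$ being absorbed into the auxiliary condition $\alpha_i r'<\eta/2$. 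You skip the preprocessing: you shrink $y_0$ radially by the variable factor $\mu=1-\eta(x')$ with $\eta(x')=\|x'-x\|^{1/2}$ and lift $x'-\mu x$ by a single fixed $\sigma_\delta$ with $\delta<\varepsilon$, extracting the slack from $\delta<\varepsilon$ via the two-sided estimate $\eta-\|x'-x\|\le\|x'-\mu x\|\le\eta+\|x'-x\|$ together with the sign cases you indicate. I checked the delicate case $\alpha_i+\varepsilon<0$, where shrinking $y_0$ actually \emph{worsens} the bound: there $\alpha_i+\delta<\alpha_i+\varepsilon<0$, the requirement becomes $\|x'-\mu x\|\ge\eta\,(\alpha_i+\varepsilon)/(\alpha_i+\delta)$ with ratio in $(0,1)$, and the lower bound $\eta-\|x'-x\|$ settles it once $\|x'-x\|^{1/2}$ is below a constant depending on $i$; finiteness of $I$ then yields a single threshold, as you say, and the remaining cases are immediate. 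Two cosmetic points: define $y'(x')$ only for $x'\ne x$ (at $x'=x$ the direction $(x'-\mu x)/\|x'-\mu x\|$ degenerates as $\eta\to0$, but $y_0$ itself witnesses $\varphi_\varepsilon(x)\cap U\ne\emptyset$), and note $\eta(x')<1$ near $x$ so that $\mu>0$. The trade-off: the paper's convex-combination preprocessing buys a slack $\eta$ independent of $x'$ and a uniform, sign-free verification, but consumes two members of the family ($\sigma_{\varepsilon/2}$ and $\sigma_{\eta/2}$, with $\eta$ determined mid-proof); your variant needs only one $\sigma_\delta$ per $\varepsilon$, at the cost of the variable shrink rate and the case analysis. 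Both arguments use hypothesis (3) identically --- TVS-boundedness of the relevant $\sigma(S_X)$ forces the correction term into a prescribed neighborhood of $0$ uniformly in $x'$, which is also exactly what makes your convergence $y'(x')\to y_0$, and hence membership in $U$, legitimate in a possibly non-metrizable, non-Hausdorff $Y$.
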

Before embarking on the proof, let us point out that the salient point
lies in the fact that the right inverses $\sigma_{\varepsilon}^{\prime}$
of $T$ on the unit sphere of $X$ are continuous, whereas this is
not required for the original family of the $\sigma_{\varepsilon}$'s,
and that this extra property can be achieved retaining the relevant
inequalities. It is for this that Michael's Selection Theorem is used.
The subsequent conditional extension of such a $\sigma_{\varepsilon}^{\prime}$
to the whole space is rather trivial. 

Furthermore, we note that, although in the applications we have in
mind the constants $\alpha_{i}$ will be positive and each $\rho_{i}$
will be the restriction to $C$ of a continuous seminorm or (if $Y$
is a Banach space) a continuous real-linear functional on the whole
space $Y$, the present proof does not require this.
\begin{proof}
Let $\varepsilon>0$ be arbitrary. Since $\sigma_{\varepsilon}(x)\in\varphi_{\varepsilon}(x)$,
for all $x\in S_{X}$, $\varphi_{\varepsilon}$ is non-empty-valued.
By continuity of $T$ and the $\rho_{i}$'s, $\varphi_{\varepsilon}$
is closed-valued. Since $T$ is affine on the convex set $C$, and
each $\rho_{i}$, if any, is subadditive and positively homogeneous,
$\varphi_{\varepsilon}$ is convex-valued. 

We will now show that $\varphi_{\varepsilon}$ is lower hemicontinuous,
for any fixed $\varepsilon>0$. To this end, let $x\in S_{X}$ be
arbitrary, and let $U\subseteq Y$ be open such that $\varphi_{\varepsilon}(x)\cap U\neq\emptyset$. 

We start by establishing that there exists some $y\in\varphi_{\varepsilon}(x)\cap U$
such that $\rho_{i}(y)<\alpha_{i}+\varepsilon$, for all $i\in I$
(if any), where the inequality that is valid for $\sigma_{\varepsilon}($$x)$
has been improved to strict inequality for $y$. As to this, choose
$y^{\prime}$ in the non-empty set $\varphi_{\varepsilon}(x)\cap U$,
and define $y_{t}:=t\sigma_{\varepsilon/2}(x)+(1-t)y^{\prime}$, for
$t\in[0,1]$. Then $y_{t}\in C$ and $Ty_{t}=x$, for all $t\in[0,1]$.
Now, for all $t\in(0,1]$ and all $i\in I$, 
\begin{eqnarray*}
\rho_{i}(y_{t}) & \leq & t\rho_{i}(\sigma_{\varepsilon/2}(x))+(1-t)\rho_{i}(y^{\prime})\\
 & \leq & t\left(\alpha_{i}+\frac{\varepsilon}{2}\right)+(1-t)\left(\alpha_{i}+\varepsilon\right)\\
 & < & t\left(\alpha_{i}+\varepsilon\right)+(1-t)\left(\alpha_{i}+\varepsilon\right)\\
 & = & \left(\alpha_{i}+\varepsilon\right).
\end{eqnarray*}
Since $U$ is open, there exists some $t_{0}>0$ such that $y:=y_{t_{0}}\in U$.
Then $y$ is as required.

Having found and fixed this $y$, we define $\eta:=\min_{i\in I}\{\alpha_{i}+\varepsilon-\rho_{i}(y)\}>0$
if $I\neq\emptyset$, and $\eta:=1$ if $I=\emptyset$; here we use
that $I$ is finite.

Next, let $W\subseteq Y$ be an open neighborhood of zero such that
$y+W\subseteq U$. Since $\sigma_{\eta/2}(S_{X})$ is bounded by assumption,
we can fix some $0<r\leq1$ such that $r'\sigma_{\eta/2}(S_{X})\subseteq W$,
for all $0\leq r^{\prime}<r$, and $\alpha_{i}r'<\eta/2$, for all
$0\leq r^{\prime}<r$ and all $i\in I$. 

Now, if $x'\in X$ satisfies $0<\|x'\|<r$, then
\[
y+\|x'\|\sigma_{\eta/2}\left(\frac{x'}{\|x'\|}\right)\in C,
\]
\[
T\left(y+\|x'\|\sigma_{\eta/2}\left(\frac{x'}{\|x'\|}\right)\right)=x+x',
\]
and 
\[
y+\|x'\|\sigma_{\eta/2}\left(\frac{x'}{\|x'\|}\right)\in y+\Vert x^{\prime}\Vert\sigma_{\eta/2}(S_{X})\subset y+W\subset U.
\]
Furthermore, for such $x^{\prime}$ and for $i\in I$ we find, using
$\alpha_{i}\|x'\|<\eta/2$ and $\|x'\|<r\leq1$, that 
\begin{eqnarray*}
\rho_{i}\left(y+\|x'\|\sigma_{\eta/2}\left(\frac{x'}{\|x'\|}\right)\right) & \leq & \rho_{i}(y)+\|x'\|\rho_{i}\left(\sigma_{\eta/2}\left(\frac{x'}{\|x'\|}\right)\right)\\*
 & \leq & \rho_{i}(y)+\|x'\|\left(\alpha_{i}+\frac{\eta}{2}\right)\\
 & < & \rho_{i}(y)+\frac{\eta}{2}+\frac{\eta}{2}\\
 & = & \rho_{i}(y)+\eta\\
 & \leq & \rho_{i}(y)+\alpha_{i}+\varepsilon-\rho_{i}(y)\\
 & = & \alpha_{i}+\varepsilon.
\end{eqnarray*}
Therefore, if $x+x'\in S_{X}$ with $0<\|x'\|<r$, we conclude that
\[
y+\|x'\|\sigma_{\eta/2}(x'/\|x'\|)\in\varphi_{\varepsilon}(x+x')\cap U.
\]
Hence $\varphi_{\varepsilon}$ is lower hemicontinuous on $S_{X}$,
as was to be proved.

If $Y$ is a Fr\'echet space then, since $S_{X}$ as a metric space
is paracompact \citep{Stone}, Michael's Selection Theorem (Theorem
\ref{thm:selection_theorem}), applied to each individual $\varphi_{\varepsilon}$,
supplies a family of continuous maps $\sigma_{\varepsilon}^{\prime}:S_{X}\to C$,
such that $\sigma_{\varepsilon}(x)\in\varphi_{\varepsilon}(x)$, for
all $\varepsilon>0$ and all $x\in S_{X}$. Then the $\sigma_{\varepsilon}^{\prime}$
are as required.

If $\varepsilon>0$ and $\sigma_{\varepsilon}^{\prime}(S_{X})$ happens
to be bounded in the topological vector space $Y$, we extend $\sigma_{\varepsilon}^{\prime}:S_{X}\to C$
to a positively homogeneous $C$-valued map on all of $X$, also denoted
by $\sigma_{\varepsilon}^{\prime}$, by defining 
\[
\sigma_{\varepsilon}^{\prime}(x):=\begin{cases} 
0 & \mbox{\mbox{{for\,\,}}}x=0;\\
\|x\|\sigma_{\varepsilon}^{\prime}\left(\frac{x}{\|x\|}\right) & \mbox{{for\,\,}}x\neq0.
\end{cases}
\]
The continuity of $\sigma_{\varepsilon}^{\prime}$ at 0 then follows
from the boundedness of $\sigma_{\varepsilon}^{\prime}(S_{X})$, and
at other points it is immediate. It is easily verified that such a
global $\sigma_{\varepsilon}^{\prime}$ has the properties as claimed.
\end{proof}
Combination of Theorem \ref{thm:open_mapping_theorem} and Proposition
\ref{prop:continuous_right_inverses_Frechet} yields the following
key result on right inverses of surjections from cones onto Banach
spaces. The structure of the proofs makes it clear that it is ultimately
based on the Open Mapping Theorem for Banach spaces and Michael's
Selection Theorem.
\begin{prop}
\label{prop:key_proposition}Let $X$ and $Y$ be real or complex
Banach spaces, not necessarily over the same field, and let $C$ be
a closed not necessarily proper cone in $Y$. Let $T:C\to X$ be a
surjective continuous additive positively homogeneous map. 

Furthermore, let $J$ be a finite set, possibly empty, and, for all
$j\in J$, let $\rho_{j}:C\to\mathbb{R}$ be a continuous subadditive
positively homogeneous map. For example, each $\rho_{j}$ could be
the restriction to $C$ of a globally defined continuous seminorm
or continuous real-linear functional.
\begin{enumerate}
\item If $\rho_{j}$ is bounded from above on $S_{Y}\cap C$, for all $j\in J$,
then there exist constants $K>0$, $\alpha_{j}\in\mathbb{\mathbb{R}}$
$(j\in J)$, and a map $\gamma:S_{X}\to C$, such that:

\begin{enumerate}
\item $T\circ\gamma=\mbox{id}_{S_{X}}$;
\item $\Vert\gamma(x)\Vert\leq K$, for all $x\in S_{X}$;
\item $\rho_{j}(\gamma(x))\leq\alpha_{j}$, for all $x\in S_{X}$ and $j\in J$.
\end{enumerate}
\item If $K>0,\,\alpha_{j}\in\mathbb{R}$ $(j\in J)$, and $\gamma:S_{X}\to C$
satisfy \textup{(a)}, \textup{(b)} and \textup{(c)} in part \textup{(1)},
then, for every $\varepsilon>0$, there exists a continuous positively
homogeneous map $\gamma_{\varepsilon}:X\to C$ such that:

\begin{enumerate}
\item $T\circ\gamma_{\varepsilon}=\mbox{id}_{X}$;
\item $\Vert\gamma_{\varepsilon}(x)\Vert\leq(K+\varepsilon)\Vert x\Vert$,
for all $x\in X$;
\item $\rho_{j}(\gamma(x))\leq(\alpha_{j}+\varepsilon)\Vert x\Vert$, for
all $x\in X$ and $j\in J$.
\end{enumerate}
\end{enumerate}
\end{prop}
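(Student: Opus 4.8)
The plan is to obtain part (1) as a direct consequence of the Open Mapping Theorem and to obtain part (2) by feeding the data produced in part (1) into Proposition~\ref{prop:continuous_right_inverses_Frechet}, the decisive move being to adjoin the norm on $C$ to the given finite family of subadditive functionals.

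For part (1): a closed cone $C$ in the Banach space $Y$ is a complete metric cone for the metric induced by the norm, so Theorem~\ref{thm:open_mapping_theorem} applies to the surjection $T$. Using the equivalence (1)$\Leftrightarrow$(2) there, I would extract a constant $K>0$ such that every $x\in X$ has some $c\in C$ with $Tc=x$ and $\|c\|\le K\|x\|$. For each $x\in S_X$ I would then simply choose (invoking the axiom of choice; no continuity is demanded here) one such preimage and call it $\gamma(x)$, giving (a) and the bound $\|\gamma(x)\|\le K$ of (b) at once. For (c) I would set $M_j:=\sup_{u\in S_Y\cap C}\rho_j(u)$, which is finite by the standing hypothesis, and take $\alpha_j:=\max\{0,KM_j\}$; positive homogeneity of $\rho_j$ together with $0\le\|\gamma(x)\|\le K$ then yields $\rho_j(\gamma(x))\le\alpha_j$ in a line, the case $\gamma(x)=0$ being handled by $\rho_j(0)=0$. (If $S_Y\cap C=\emptyset$ then $C=\{0\}$, so $X=\{0\}$ and the assertion is vacuous.) This part is pure bookkeeping, the only mild subtlety being the sign of $M_j$.

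For part (2): I would apply Proposition~\ref{prop:continuous_right_inverses_Frechet} with the enlarged index set $I:=J\cup\{\ast\}$, attaching to the fresh index $\ast$ the norm $\rho_\ast(c):=\|c\|$, which is continuous, subadditive and positively homogeneous, and the constant $\alpha_\ast:=K$, while keeping the given $\rho_j,\alpha_j$ for $j\in J$. Taking the constant family $\sigma_\varepsilon:=\gamma$ for every $\varepsilon>0$, the three hypotheses of that proposition are met: $T\circ\gamma=\mathrm{id}_{S_X}$; the inequalities $\rho_j(\gamma(x))\le\alpha_j\le\alpha_j+\varepsilon$ and $\rho_\ast(\gamma(x))=\|\gamma(x)\|\le K=\alpha_\ast\le\alpha_\ast+\varepsilon$ hold on $S_X$; and $\gamma(S_X)$ sits inside the norm ball of radius $K$ and so is bounded in $Y$. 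Since $Y$, being Banach, is Fr\'echet, the proposition delivers, for each $\varepsilon>0$, a continuous $\sigma'_\varepsilon:S_X\to C$ with $T\circ\sigma'_\varepsilon=\mathrm{id}_{S_X}$ and $\rho_i(\sigma'_\varepsilon(x))\le\alpha_i+\varepsilon$ for all $i\in I$ and $x\in S_X$.

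The reason for carrying $\ast$ along is that the inequality for $i=\ast$ now reads $\|\sigma'_\varepsilon(x)\|\le K+\varepsilon$ on $S_X$, so $\sigma'_\varepsilon(S_X)$ is automatically bounded in $Y$; this is precisely the boundedness required to activate the final extension clause of Proposition~\ref{prop:continuous_right_inverses_Frechet}, which then extends $\sigma'_\varepsilon$ to a continuous positively homogeneous $\gamma_\varepsilon:X\to C$ with $T\circ\gamma_\varepsilon=\mathrm{id}_X$ and $\rho_i(\gamma_\varepsilon(x))\le(\alpha_i+\varepsilon)\|x\|$ for all $i\in I$. Reading off $i=\ast$ gives (b) and $i\in J$ gives (c), while (a) is immediate. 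I expect the one genuine difficulty to be conceptual rather than computational: recognising that the norm must be treated as one of the functionals from the start, so that the boundedness hypothesis needed for the homogeneous extension is supplied for free instead of assumed. Once that is seen, the argument is a straightforward assembly of Theorem~\ref{thm:open_mapping_theorem} and Proposition~\ref{prop:continuous_right_inverses_Frechet}.
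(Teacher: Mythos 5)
Your proposal is correct and takes essentially the same route as the paper's own proof: part (1) from the equivalence (1)$\Leftrightarrow$(2) of Theorem~\ref{thm:open_mapping_theorem} plus the homogeneity bookkeeping for the $\rho_j$'s, and part (2) by adjoining the norm as an extra functional (your index $\ast$ is the paper's index $\Vert\,\Vert$, with $\alpha_{\Vert\,\Vert}=K$) and applying Proposition~\ref{prop:continuous_right_inverses_Frechet} with the constant family $\sigma_\varepsilon=\gamma$, so that the bound $\Vert\sigma'_\varepsilon(x)\Vert\le K+\varepsilon$ on $S_X$ triggers the homogeneous extension clause. The only cosmetic difference is your $\alpha_j:=\max\{0,KM_j\}$ where the paper instead says ``we may assume $\beta_j\ge 0$''.
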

\begin{proof}
As to the first part, we start by applying part (2) of Theorem \ref{thm:open_mapping_theorem}
and obtain $K>0$ and a map $\gamma:S_{X}\to C$, such that $T\circ\gamma=\mbox{id}_{S_{X}}$
and $\Vert\gamma(x)\Vert\leq K$ $(x\in S_{X})$. 

If $j\in J$, and $\beta_{j}\in\mathbb{R}$ is such that $\rho_{j}(c)\leq\beta_{j}$
for all $c\in S_{Y}\cap C$, where we may assume that $\beta_{j}\geq0$,
then $\rho_{j}(\gamma(x))\leq K\beta_{j}$, for all $x\in S_{X}$.
Indeed, this is obvious if $\gamma(x)$=0, and if $\gamma(x)\neq0$
we have
\[
\rho_{j}\left(\gamma(x)\right)=\Vert\gamma(x)\Vert\rho_{j}\left(\frac{\gamma(x)}{\|\gamma(x)\|}\right)\leq K\beta_{j}.
\]

The existence of the $\alpha_{j}:=K\beta_{j}$ is then clear. 

As to the second part, suppose that $K>0,\,\alpha_{j}\in\mathbb{R}$
$(j\in J$) and $\gamma:S_{X}\to C$ satisfy (a), (b) and (c) in part
(1). We augment J to $I:=J\cup\{\Vert\,\Vert\}$, where we choose
an index symbol $\Vert\,\Vert\notin J$, and let $\rho_{\Vert\,\Vert}(c):=\Vert c\Vert$,
for $c\in C$, and put $\alpha_{\Vert\,\Vert}:=K$. We can now apply
Proposition \ref{prop:continuous_right_inverses_Frechet} with $\sigma_{\varepsilon}=\gamma$
for all $\varepsilon>0$, since its hypotheses (1), (2) and (3) are
then satisfied. The continuous $\sigma_{\varepsilon}^{\prime}:S_{X}\to C$
as furnished by Proposition \ref{prop:continuous_right_inverses_Frechet}
are, in particular, such that $\rho_{\Vert\,\Vert}(\sigma_{\varepsilon}^{\prime}(x))\leq\alpha_{\Vert\,\Vert}+\varepsilon$,
i.e., such that $\Vert\sigma_{\varepsilon}^{\prime}(x)\Vert\leq K+\varepsilon$,
for all $x\in S_{X}$. Hence each of the sets $\sigma_{\varepsilon}^{\prime}(S_{X})$
is bounded in $Y$, and the last part of Proposition \ref{prop:continuous_right_inverses_Frechet}
applies, yielding global $\sigma_{\varepsilon}^{\prime}:X\to C$ that
can be taken as the required $\gamma_{\varepsilon}.$
\end{proof}
Let us remark explicitly that the $\alpha_{j}$'s need not be non-negative
and that in part (2) the $\rho_{j}$'s are not required to be bounded
from above on $S_{Y}\cap C$ as in part (1), but rather on $\gamma(S_{X})$
(as a consequence of the hypothesized validity of (1)(c)), which is
a weaker hypothesis. 

We extract two practical consequences from Proposition \ref{prop:key_proposition}.
First of all, if the $\rho_{j}$'s are bounded from above on $S_{Y}\cap C$
then part (1) of Proposition \ref{prop:key_proposition} is applicable
and its conclusion shows that the hypothesis of part (2) are satisfied.
Taking $\varepsilon=1$, say, we therefore have the following, which has the Strong Open Mapping Theorem (Theorem \ref{thm:bounded_continuous_inverses_exist_introduction_version}) as a special case when $J$ is empty.
\begin{thm}
\label{thm:continuous_right_inverses_exists}
\textup{(}Strong Open Mapping Theorem, General Version\textup{)}
Let $X$ and $Y$ be
real or complex Banach spaces, not necessarily over the same field,
and let $C$ be a closed not necessarily proper cone in $Y$. Let
$T:C\to X$ be a surjective continuous additive positively homogeneous
map.

Furthermore, let $J$ be a finite set, possibly empty, and, for all
$j\in J$, let $\rho_{j}:C\to\mathbb{R}$ be a continuous subadditive
positively homogeneous map that is bounded from above on $S_{Y}\cap C$.
For example, each $\rho_{j}$ could be the restriction to $C$ of
a globally defined continuous seminorm or continuous real-linear functional.

Then there exist constants $K>0$ and $\alpha_{j}\in\mathbb{R}\,\,(j\in J)$
and a continuous positively homogeneous map $\gamma:X\to C$, such
that:
\begin{enumerate}
\item $T\circ\gamma=\mbox{id}_{X}$;
\item $\Vert\gamma(x)\Vert\leq K\Vert x\Vert$, for all $x\in X$;
\item $\rho_{j}(\gamma(x))\leq\alpha_{j}\Vert x\Vert$, for all $x\in X$
and $j\in J$. 
\end{enumerate}
\end{thm}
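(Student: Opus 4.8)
The plan is to obtain this theorem as an immediate two-step application of Proposition \ref{prop:key_proposition}, absorbing the small losses in the constants at the very end.

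First I would invoke part (1) of Proposition \ref{prop:key_proposition}. Its only hypothesis beyond those already in force here is that each $\rho_{j}$ be bounded from above on $S_{Y}\cap C$, which is precisely what we are now assuming. Part (1) therefore supplies a constant $K_{0}>0$, real numbers $\alpha_{j}^{0}$ $(j\in J)$, and a map $\gamma_{0}:S_{X}\to C$ --- not yet asserted to be continuous --- satisfying $T\circ\gamma_{0}=\mathrm{id}_{S_{X}}$, $\Vert\gamma_{0}(x)\Vert\leq K_{0}$, and $\rho_{j}(\gamma_{0}(x))\leq\alpha_{j}^{0}$ for all $x\in S_{X}$ and $j\in J$.

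The key observation is that the triple $(K_{0},(\alpha_{j}^{0})_{j\in J},\gamma_{0})$ just produced is exactly the data that part (2) of Proposition \ref{prop:key_proposition} takes as input: conditions (a), (b), (c) of its part (1) are precisely the standing hypotheses of its part (2). I would therefore apply part (2) with $\varepsilon$ fixed to some convenient value, say $\varepsilon=1$. This yields a continuous positively homogeneous map $\gamma:X\to C$ with $T\circ\gamma=\mathrm{id}_{X}$, $\Vert\gamma(x)\Vert\leq(K_{0}+1)\Vert x\Vert$, and $\rho_{j}(\gamma(x))\leq(\alpha_{j}^{0}+1)\Vert x\Vert$ for all $x\in X$ and $j\in J$.

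Setting $K:=K_{0}+1$ and $\alpha_{j}:=\alpha_{j}^{0}+1$ then gives exactly the asserted conclusion. There is no genuine obstacle at this stage: the entire substance of the argument is already contained in Proposition \ref{prop:key_proposition} (and hence ultimately in Theorem \ref{thm:open_mapping_theorem} together with Michael's Selection Theorem), and the only thing one needs to verify is that the output of its first part matches the input required by its second part, which it does verbatim. The single cosmetic point worth recording is that the constants produced are $K_{0}+1$ and $\alpha_{j}^{0}+1$ rather than $K_{0}$ and $\alpha_{j}^{0}$; since the statement merely asserts the existence of \emph{some} admissible constants, folding the fixed $\varepsilon=1$ into them is harmless.
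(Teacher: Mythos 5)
Your proof is correct and is essentially identical to the paper's own argument: the paper likewise notes that the boundedness hypothesis makes part (1) of Proposition \ref{prop:key_proposition} applicable, that its conclusion is exactly the input for part (2), and then takes $\varepsilon=1$, absorbing the loss into the constants.
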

The next consequence of Proposition \ref{prop:key_proposition} states
that the existence of a family of possibly ill-behaved right inverses
on the unit sphere is actually equivalent with the existence of a
family of well-behaved global ones. Note that, compared with Theorem
\ref{thm:continuous_right_inverses_exists}, the boundedness assumption
from above for the $\rho_{j}$'s on $S_{Y}\cap C$ has been replaced
with the assumptions (1)(c) and (2)(c) below.
\begin{thm}
\label{thm:improving_a_family_of_right_inverses_is_possible}Let $X$
and $Y$ be real or complex Banach spaces, not necessarily over the
same field, and let $C$ be a closed not necessarily proper cone in
$Y$. Let $T:C\to X$ be a surjective continuous additive positively
homogeneous map. 

Furthermore, let $J$ be a finite set, possibly empty, and, for all
$j\in J$, let $\rho_{j}:C\to\mathbb{R}$ be a continuous subadditive
positively homogeneous map. For example, each $\rho_{j}$ could be
the restriction to $C$ of a globally defined continuous seminorm
or continuous real-linear functional.

If $K>0$ and $\alpha_{j}\in\mathbb{R}\,(j\in J)$ are constants,
then the following are equivalent:
\begin{enumerate}
\item For every $\varepsilon>0$, there exists a map $\gamma_{\varepsilon}:S_{X}\to C$,
such that:

\begin{enumerate}
\item $T\circ\gamma_{\varepsilon}=\mbox{id}_{S_{X}}$;
\item $\Vert\gamma_{\varepsilon}(x)\Vert\leq K+\varepsilon$, for all $x\in S_{X}$;
\item $\rho_{j}(\gamma_{\varepsilon}(x))\leq\alpha_{j}+\varepsilon$, for
all $x\in S_{X}$ and $j\in J$.
\end{enumerate}
\item For every $\varepsilon>0$, there exists a continuous positively homogeneous
map $\gamma_{\varepsilon}:X\to C$ such that:

\begin{enumerate}
\item $T\circ\gamma_{\varepsilon}=\mbox{id}_{X}$;
\item $\Vert\gamma_{\varepsilon}(x)\Vert\leq(K+\varepsilon)\Vert x\Vert$,
for all $x\in X$;
\item $\rho_{j}(\gamma_{\varepsilon}(x))\leq(\alpha_{j}+\varepsilon)\Vert x\Vert$,
for all $x\in X$ and $j\in J$.
\end{enumerate}
\end{enumerate}
\end{thm}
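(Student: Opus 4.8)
The plan is to prove the two implications separately, noting that all the genuine work has already been carried out in Proposition \ref{prop:key_proposition}; both directions then reduce to bookkeeping with the constants, which is why no real obstacle arises.

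First I would dispose of the easy implication $(2)\Rightarrow(1)$. Given, for each $\varepsilon>0$, a continuous positively homogeneous global right inverse $\gamma_{\varepsilon}:X\to C$ as in $(2)$, I would simply restrict it to the unit sphere $S_{X}$. For $x\in S_{X}$ one has $\|x\|=1$, so evaluating $(2)(a)$, $(2)(b)$ and $(2)(c)$ on such $x$ yields exactly $(1)(a)$, $(1)(b)$ and $(1)(c)$; neither continuity nor homogeneity of the restriction is needed for $(1)$, so this is immediate.

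For the substantial implication $(1)\Rightarrow(2)$, the key idea is to feed a sphere-level right inverse supplied by $(1)$ into part $(2)$ of Proposition \ref{prop:key_proposition}, which upgrades any map satisfying the constant bounds of part $(1)$ of that proposition to a continuous positively homogeneous global right inverse, at the cost of an arbitrarily small increment in every constant. Concretely, I would fix $\varepsilon>0$ and apply hypothesis $(1)$ with parameter $\varepsilon/2$ to obtain a map $\gamma_{\varepsilon/2}:S_{X}\to C$ with $T\circ\gamma_{\varepsilon/2}=\mathrm{id}_{S_{X}}$, $\|\gamma_{\varepsilon/2}(x)\|\le K+\varepsilon/2$, and $\rho_{j}(\gamma_{\varepsilon/2}(x))\le\alpha_{j}+\varepsilon/2$ for all $x\in S_{X}$ and $j\in J$. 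This is precisely a map satisfying conditions $(a)$, $(b)$ and $(c)$ of part $(1)$ of Proposition \ref{prop:key_proposition}, but with the constants $K':=K+\varepsilon/2$ and $\alpha_{j}':=\alpha_{j}+\varepsilon/2$ in place of $K$ and $\alpha_{j}$.

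I would then invoke part $(2)$ of Proposition \ref{prop:key_proposition} with these constants $K'$, $\alpha_{j}'$, the same maps $\rho_{j}$, and error parameter $\varepsilon/2$. Here it matters that part $(2)$ of that proposition imposes no boundedness assumption on the $\rho_{j}$ over $S_{Y}\cap C$ (that hypothesis belongs only to part $(1)$), so it applies directly. It produces a continuous positively homogeneous map $\gamma_{\varepsilon}:X\to C$ with $T\circ\gamma_{\varepsilon}=\mathrm{id}_{X}$, $\|\gamma_{\varepsilon}(x)\|\le(K'+\varepsilon/2)\|x\|=(K+\varepsilon)\|x\|$, and $\rho_{j}(\gamma_{\varepsilon}(x))\le(\alpha_{j}'+\varepsilon/2)\|x\|=(\alpha_{j}+\varepsilon)\|x\|$ for all $x\in X$ and $j\in J$, which is exactly the map required in $(2)$. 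The only point demanding a moment's care is the split $\varepsilon=\varepsilon/2+\varepsilon/2$, one half absorbed by the choice of sphere-level map and one half by the upgrade; there is no conceptual obstacle, since the heavy machinery combining the Open Mapping Theorem with Michael's Selection Theorem is already encapsulated in Proposition \ref{prop:key_proposition}.
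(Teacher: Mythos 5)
Your proof is correct and follows essentially the same route as the paper: the trivial restriction argument for $(2)\Rightarrow(1)$, and for $(1)\Rightarrow(2)$ invoking hypothesis $(1)$ at level $\varepsilon/2$ and then part (2) of Proposition \ref{prop:key_proposition} with $K+\varepsilon/2$, $\alpha_{j}+\varepsilon/2$ and error parameter $\varepsilon/2$. Your observation that part (2) of that proposition needs no boundedness of the $\rho_{j}$ on $S_{Y}\cap C$ is also exactly the point the paper makes after its statement.
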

\begin{proof}
Clearly the second part implies the first. For the converse implication,
let $\varepsilon>0$ be given. Then, by assumption, there exists a
map (we add accents to avoid notational confusion) $\gamma_{\varepsilon/2}^{\prime}:S_{X}\to C$,
such that:
\begin{enumerate}
\item $T\circ\gamma_{\varepsilon/2}^{\prime}=\mbox{id}_{S_{X}}$;
\item $\Vert\gamma_{\varepsilon/2}^{\prime}(x)\Vert\leq K+\varepsilon/2$,
for all $x\in S_{X}$;
\item $\rho_{j}(\gamma_{\varepsilon/2}^{\prime}(x))\leq\alpha_{j}+\varepsilon/2$,
for all $x\in S_{X}$ and $j\in J$.
\end{enumerate}
We can now apply part (2) of Proposition \ref{prop:key_proposition},
with $K$ replaced with $K+\varepsilon/2$, $\alpha_{j}$ with $\alpha_{j}+\varepsilon/2$,
$\gamma$ with $\gamma_{\varepsilon/2}^{\prime}$, and $\varepsilon$
with $\varepsilon/2$. The map $\gamma_{\varepsilon/2}$ as furnished
by part (2) of Proposition \ref{prop:key_proposition} can then be
taken as the map $\gamma_{\varepsilon}$ in part (2) of the present
Theorem.
\end{proof}

\section{Applications\label{sec:applications}}

By varying $C$ and the $\rho_{j}$'s various types of consequences
of Theorems \ref{thm:continuous_right_inverses_exists} and \ref{thm:improving_a_family_of_right_inverses_is_possible}
can be obtained, and we collect some in the present section, considering
situations where the $\rho_{j}$'s are restrictions to $C$ of globally
defined continuous seminorms or continuous real-linear functionals.
This seems to be a natural context to work in, but we note that it
is not required as such by these two underlying Theorems, nor by the
key Proposition \ref{prop:key_proposition}, so that applications
of another type are conceivable. 

As in earlier sections, if $X$ is a normed space, then $S_{X}:=\{x\in X:\Vert x\Vert=1\}$
is its unit sphere.

To start with, Theorems \ref{thm:continuous_right_inverses_exists}
and \ref{thm:improving_a_family_of_right_inverses_is_possible} are
clearly applicable when $T:C\to X$ is the restriction to $C$ of
a global continuous linear map $T:Y\to X$ and (as already mentioned
in these Theorems) each of the $\rho_{j}$'s is the restriction of
a globally defined continuous seminorm or continuous real-linear functional.
Furthermore, $Y$ is a closed cone in itself, so that these Theorems
can be specialized to yield statements on well-behaved right inverses
for continuous linear surjections between Banach spaces. For reasons
of space, we refrain from explicitly formulating all these quite obvious
special cases.

Instead, we give applications to the internal structure of a Banach
space that is a sum of closed not necessarily proper cones, and to
the structure of spaces of continuous functions with values in such
a Banach space. Thus we return to the to the improvements of And\^o's
Theorem and our original motivating question alluded to in the introduction.

The following result applies, in particular, when $X=\sum_{i=1}^{n}C_{i}$
is the sum of a finite number of closed not necessarily proper cones.
In that case, the Banach space $Y$ in the following Theorem is the
direct sum of $n$ copies of $X$.
\begin{thm}
\label{thm:continuous_selections_from_generating_cones_ell_one_case}Let
$X$ be a real or complex Banach space. Let $I$ be a non-empty set,
possibly uncountable, and let $\{C_{i}\}_{i\in I}$ be a collection
of closed not necessarily proper cones in $X$, such that every $x\in X$
can be written as an absolutely convergent series $x=\sum_{i\in I}c_{i}$,
where $c_{i}\in C_{i}$, for all $i\in I.$ 

Let $Y=\ell^{1}(I,X)$ be the $\ell^{1}$-direct sum of $|I|$ copies
of $X$, and let $C$ be the natural closed cone in the Banach space
$Y$, consisting of those elements where the $i$-th component is
in $C_{i}$. Finally, let $J$ be a finite set, possibly empty, and,
for all $j\in J$, let $\rho_{j}:Y\to\mathbb{R}$ be a continuous
seminorm or a continuous real-linear functional. 

Then:
\begin{enumerate}
\item There exist a constant $K>0$ and a continuous positively homogeneous
map $\gamma:X\to C$ with continuous positively homogeneous component
maps $\gamma_{i}:X\to C_{i}\,(i\in I)$, such that:

\begin{enumerate}
\item $x=\sum_{i\in I}\gamma_{i}(x)$, for all $x\in X$;
\item $\sum_{i\in I}\Vert\gamma_{i}(x)\Vert\leq K\Vert x\Vert$, for all
$x\in X$.
\end{enumerate}
\item If $K>0$ and $\alpha_{j}\in\mathbb{R\,}(j\in J)$ are constants,
then the following are equivalent:

\begin{enumerate}
\item For every $\varepsilon>0$, there exists a map $\gamma_{\varepsilon}:S_{X}\to C$
with component maps $\gamma_{\varepsilon,i}:S_{X}\to C_{i}\,(i\in I)$,
such that:

\begin{enumerate}
\item $x=\sum_{i\in I}\gamma_{\varepsilon,i}(x)$, for all $x\in S_{X}$;
\item $\sum_{i\in I}\Vert\gamma_{\varepsilon,i}(x)\Vert\leq(K+\varepsilon)$,
for all $x\in S_{X}$;
\item $\rho_{j}(\gamma_{\varepsilon}(x))\leq(\alpha_{j}+\varepsilon)$,
for all $x\in S_{X}$ and $j\in J$.
\end{enumerate}
\item For every $\varepsilon>0$, there exists a continuous positively homogeneous
map $\gamma_{\varepsilon}:X\to C$ with continuous positively homogeneous
component maps $\gamma_{\varepsilon,i}:X\to C_{i}\,(i\in I)$, such
that:

\begin{enumerate}
\item $x=\sum_{i\in I}\gamma_{\varepsilon,i}(x)$, for all $x\in X$;
\item $\sum_{i\in I}\Vert\gamma_{\varepsilon,i}(x)\Vert\leq(K+\varepsilon)\Vert x\Vert$,
for all $x\in X$;
\item $\rho_{j}(\gamma_{\varepsilon}(x))\leq(\alpha_{j}+\varepsilon)\Vert x\Vert$,
for all $x\in X$ and $j\in J$.
\end{enumerate}
\end{enumerate}
\end{enumerate}
\end{thm}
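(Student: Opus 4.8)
The plan is to reduce this statement about a sum of (possibly uncountably many) cones to the machinery already built in Section \ref{sec:main_results}, by setting up the right summing map and verifying that it satisfies the hypotheses of Theorems \ref{thm:continuous_right_inverses_exists} and \ref{thm:improving_a_family_of_right_inverses_is_possible}. First I would let $Y=\ell^1(I,X)$ and let $C\subseteq Y$ be the natural cone described in the statement, and define the summing map $T:C\to X$ by $T((c_i)_{i\in I}):=\sum_{i\in I}c_i$. For an element of $C\subseteq\ell^1(I,X)$ the family $(c_i)$ is absolutely summable, so this sum converges in $X$ and $\|T((c_i))\|\leq\sum_{i\in I}\|c_i\|=\|(c_i)\|_{Y}$, whence $T$ is continuous; it is clearly additive and positively homogeneous. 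The hypothesis that every $x\in X$ admits an absolutely convergent decomposition $x=\sum_{i\in I}c_i$ with $c_i\in C_i$ is exactly the statement that $T$ is surjective. Thus $T:C\to X$ is a surjective continuous additive positively homogeneous map, and $C$ is a closed not necessarily proper cone in the Banach space $Y$, so the blanket hypotheses of both main theorems hold.

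With this setup, part (1) is essentially a direct citation of Theorem \ref{thm:continuous_right_inverses_exists} (applied with $J=\emptyset$). That theorem furnishes a constant $K>0$ and a continuous positively homogeneous $\gamma:X\to C$ with $T\circ\gamma=\mathrm{id}_X$ and $\|\gamma(x)\|_Y\leq K\|x\|$. I would then read off the component maps $\gamma_i:X\to C_i$ as the coordinate projections of $\gamma$, i.e.\ $\gamma_i(x):=(\gamma(x))_i$. Each $\gamma_i$ is continuous and positively homogeneous because the coordinate projection $\ell^1(I,X)\to X$ is continuous, linear, and maps $C$ into $C_i$. The identity $T\circ\gamma=\mathrm{id}_X$ unwinds to $x=\sum_{i\in I}\gamma_i(x)$, which is (1)(a), and the bound $\|\gamma(x)\|_Y=\sum_{i\in I}\|\gamma_i(x)\|\leq K\|x\|$ is precisely (1)(b), since the $Y$-norm on $\ell^1(I,X)$ is the sum of the component norms.

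For part (2) I would invoke Theorem \ref{thm:improving_a_family_of_right_inverses_is_possible} with the same $T$, $C$, $Y$, and with each $\rho_j$ being the given continuous seminorm or continuous real-linear functional on $Y$ (these are legitimate choices of continuous subadditive positively homogeneous maps, restricted to $C$). The equivalence of (2)(a) and (2)(b) in the present theorem then matches the equivalence of parts (1) and (2) of that theorem once one translates back and forth: a map $\gamma_\varepsilon:S_X\to C$ (resp.\ $X\to C$) with component maps $\gamma_{\varepsilon,i}$ corresponds to the bounds $\|\gamma_\varepsilon(x)\|_Y=\sum_{i\in I}\|\gamma_{\varepsilon,i}(x)\|$ and $\rho_j(\gamma_\varepsilon(x))$, and the conditions $T\circ\gamma_\varepsilon=\mathrm{id}$, $\|\gamma_\varepsilon(x)\|\leq K+\varepsilon$, $\rho_j(\gamma_\varepsilon(x))\leq\alpha_j+\varepsilon$ in Theorem \ref{thm:improving_a_family_of_right_inverses_is_possible}(1) are verbatim the hypotheses of (2)(a) here, and similarly for the global versions. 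As in part (1), the continuity and positive homogeneity of the component maps $\gamma_{\varepsilon,i}$ follow from composing the continuous positively homogeneous $\gamma_\varepsilon$ with the coordinate projections.

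The only genuine point requiring care, and the step I expect to be the main (if modest) obstacle, is the passage between the intrinsic description in terms of component maps $\gamma_i:X\to C_i$ and the single $C$-valued map $\gamma:X\to C$ to which the Section \ref{sec:main_results} results apply. One must check that assembling continuous positively homogeneous component maps into a single map into $\ell^1(I,X)$, and conversely projecting, preserves continuity, positive homogeneity, and the relevant norm identity $\|\gamma(x)\|_Y=\sum_{i\in I}\|\gamma_i(x)\|$; for possibly uncountable $I$ one must also confirm that the assembled family genuinely lies in $\ell^1(I,X)$, which is guaranteed here because any $\gamma(x)\in C\subseteq\ell^1(I,X)$ has absolutely summable components by definition of the ambient $\ell^1$-direct sum. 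Once this bookkeeping is in place the result is, as the authors indicate, an essentially routine specialization of the two main theorems.
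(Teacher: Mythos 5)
Your proposal is correct and takes essentially the same approach as the paper: the paper's entire proof is the observation that the canonical summing map $T:C\to X$ satisfies the hypotheses of Theorem \ref{thm:continuous_right_inverses_exists} (giving part (1)) and Theorem \ref{thm:improving_a_family_of_right_inverses_is_possible} (giving part (2)). The verifications you spell out (continuity and surjectivity of $T$, the correspondence between $\gamma$ and its coordinate maps, the $\ell^1$-norm identity) are exactly the routine bookkeeping the paper leaves implicit.
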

\begin{proof}
Let $T:C\to X$ be the canonical summing map. Then Theorem \ref{thm:continuous_right_inverses_exists}
yields part (1), and Theorem \ref{thm:improving_a_family_of_right_inverses_is_possible}
yields part (2).
\end{proof}
In order to illustrate Theorem \ref{thm:continuous_selections_from_generating_cones_ell_one_case}
we consider the situation where $X$ is a Banach space, (pre)-ordered
by a closed not necessarily proper cone $X^{+}$. If $X^{+}$ is generating
in the sense of (pre)-ordered Banach spaces, i.e., if $X=X^{+}-X^{+}$,
and if $X^{+}$ is proper, then And\^o's Theorem (Theorem \ref{thm:Ando_theorem})
applies. On the other hand, Theorem \ref{thm:continuous_selections_from_generating_cones_ell_one_case},
also yields this result (and an even stronger one) by writing $X=X^{+}+(-X^{+})$
as the sum of two closed cones, coincidentally related by a minus
sign. For convenience we formulate the result explicitly in the usual
notation with minus signs.
\begin{cor}
\label{cor:ando_improved}Let $X$ be a real \textup{(}pre\textup{)}-ordered
Banach space, \textup{(}pre\textup{)}-ordered by a closed generating
not necessarily proper cone $X^{+}$. Let $J$ be a finite set, possibly
empty, and, for all $j\in J$, let $\rho_{j}:X\times X\to\mathbb{R}$
be a continuous seminorm or a continuous linear functional. Then:
\begin{enumerate}
\item There exist a constant $K>0$ and continuous positively homogeneous
maps $\gamma^{\pm}:X\to X^{+}$, such that:

\begin{enumerate}
\item $x=\gamma^{+}(x)-\gamma^{-}(x)$, for all $x\in X$;
\item $\Vert\gamma^{+}(x)\Vert+\Vert\gamma^{-}(x)\Vert\leq K\Vert x\Vert$,
for all $x\in X$.
\end{enumerate}
\item If $K>0$ and $\alpha_{j}\in\mathbb{R\ }(j\in J)$ are constants,
then the following are equivalent:

\begin{enumerate}
\item For every $\varepsilon>0$, there exist maps $\gamma_{\varepsilon}^{\pm}:S_{X}\to X^{+}$,
such that:

\begin{enumerate}
\item $x=\gamma_{\varepsilon}^{+}(x)-\gamma_{\varepsilon}^{-}(x)$, for
all $x\in S_{X}$;
\item $\Vert\gamma_{\varepsilon}^{+}(x)\Vert+\Vert\gamma_{\varepsilon}^{-}(x)\Vert\leq(K+\varepsilon)$,
for all $x\in S_{X}$;
\item $\rho_{j}((\gamma_{\varepsilon}^{+}(x),\gamma_{\varepsilon}^{-}(x))\leq(\alpha_{j}+\varepsilon)$,
for all $x\in S_{X}$ and $j\in J$.
\end{enumerate}
\item For every $\varepsilon>0$, there exist continuous positively homogeneous
maps $\gamma_{\varepsilon}^{\pm}:X\to X^{+}$, such that:

\begin{enumerate}
\item $x=\gamma_{\varepsilon}^{+}(x)-\gamma_{\varepsilon}^{-}(x)$, for
all $x\in X$;
\item $\Vert\gamma_{\varepsilon}^{+}(x)\Vert+\Vert\gamma_{\varepsilon}^{-}(x)\Vert\leq(K+\varepsilon)\|x\|$,
for all $x\in X$;
\item $\rho_{j}((\gamma_{\varepsilon}^{+}(x),\gamma_{\varepsilon}^{-}(x))\leq(\alpha_{j}+\varepsilon)\|x\|$,
for all $X$ and $j\in J$.
\end{enumerate}
\end{enumerate}
\end{enumerate}
\end{cor}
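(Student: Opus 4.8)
The plan is to deduce everything from Theorem \ref{thm:continuous_selections_from_generating_cones_ell_one_case} by exhibiting $X$ as the sum of the two closed cones $X^{+}$ and $-X^{+}$, which happen to be related by a minus sign. Concretely, I would take the index set $I:=\{1,2\}$ and set $C_{1}:=X^{+}$ and $C_{2}:=-X^{+}$. Since $X^{+}$ is a closed cone, so is $-X^{+}$ (negation is a linear homeomorphism of $X$), and the generating hypothesis $X=X^{+}-X^{+}$ says precisely that every $x\in X$ can be written as $x=c_{1}+c_{2}$ with $c_{1}\in C_{1}$ and $c_{2}\in C_{2}$; as $I$ is finite this is trivially an absolutely convergent series, so the hypotheses of Theorem \ref{thm:continuous_selections_from_generating_cones_ell_one_case} are met. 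The ambient Banach space there is $Y=\ell^{1}(\{1,2\},X)$, which I identify with $X\times X$ equipped with the sum of the two norms, and the relevant cone is $C=X^{+}\times(-X^{+})$.

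The one point requiring attention is that Theorem \ref{thm:continuous_selections_from_generating_cones_ell_one_case} phrases its conclusions in terms of component maps $\gamma_{i}:X\to C_{i}$, so the second component lands in $-X^{+}$, whereas the present Corollary uses two maps $\gamma^{\pm}$ that both take values in $X^{+}$. I would therefore set $\gamma^{+}:=\gamma_{1}$ and $\gamma^{-}:=-\gamma_{2}$; both are then $X^{+}$-valued, continuous and positively homogeneous, the identity $x=\gamma_{1}(x)+\gamma_{2}(x)$ becomes $x=\gamma^{+}(x)-\gamma^{-}(x)$, and $\|\gamma_{1}(x)\|+\|\gamma_{2}(x)\|=\|\gamma^{+}(x)\|+\|\gamma^{-}(x)\|$ so the norm bound of part (1) of the Theorem transfers verbatim. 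This yields part (1) of the Corollary, where no $\rho_{j}$ condition is present.

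For part (2) the same dictionary between the $\gamma_{i}$'s and the $\gamma^{\pm}$'s has to be reconciled with the way the seminorms are evaluated: the Corollary applies $\rho_{j}$ to the pair $(\gamma^{+}(x),\gamma^{-}(x))$ of positive parts, whereas the natural element of $Y$ supplied by the Theorem is $(\gamma_{1}(x),\gamma_{2}(x))=(\gamma^{+}(x),-\gamma^{-}(x))$. To bridge this I would, for each $j\in J$, define $\tilde{\rho}_{j}:X\times X\to\mathbb{R}$ by $\tilde{\rho}_{j}(u,v):=\rho_{j}(u,-v)$; precomposing with the linear homeomorphism $(u,v)\mapsto(u,-v)$ of $X\times X$ sends a continuous seminorm to a continuous seminorm and a continuous linear functional to a continuous linear functional, so the $\tilde{\rho}_{j}$ are admissible inputs. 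Applying the equivalence in part (2) of Theorem \ref{thm:continuous_selections_from_generating_cones_ell_one_case} to the $\tilde{\rho}_{j}$ and translating both sides back through $\gamma^{+}=\gamma_{1}$, $\gamma^{-}=-\gamma_{2}$ turns $\tilde{\rho}_{j}(\gamma_{1}(x),\gamma_{2}(x))$ into $\rho_{j}(\gamma^{+}(x),\gamma^{-}(x))$, so the $\rho_{j}$-inequalities in (2)(a) and (2)(b) of the Corollary match those of the Theorem exactly; since this translation is a bijection between the two families, the stated equivalence follows. The deduction is essentially a change of variables, so there is no serious obstacle; the only place demanding care is this bookkeeping with the minus sign, both in passing from the component maps to the maps $\gamma^{\pm}$ and in precomposing each $\rho_{j}$ with the second-coordinate reflection so that it is evaluated on the correct pair.
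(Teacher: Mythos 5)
Your proposal is correct and is essentially the paper's own proof: the authors likewise apply Theorem \ref{thm:continuous_selections_from_generating_cones_ell_one_case} with $I=\{1,2\}$, $C_{1}=X^{+}$, $C_{2}=-X^{+}$ and set $\gamma^{+}=\gamma_{1}$, $\gamma^{-}=-\gamma_{2}$ (respectively $\gamma_{\varepsilon}^{+}=\gamma_{\varepsilon,1}$, $\gamma_{\varepsilon}^{-}=-\gamma_{\varepsilon,2}$ in part (2)). Your explicit precomposition $\tilde{\rho}_{j}(u,v):=\rho_{j}(u,-v)$ to align the seminorms with the second-coordinate sign is precisely the bookkeeping the paper leaves implicit, and you carry it out correctly.
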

\begin{proof}
We apply Theorem \ref{thm:continuous_selections_from_generating_cones_ell_one_case}
with $I=\{1,2\}$, $C_{1}=X^{+}$ and $C_{2}:=-X^{+}$, and then let
$\gamma^{+}=\gamma_{1}$ and $\gamma^{-}=-\gamma_{2}$ in part (1),
and $\gamma_{\varepsilon}^{+}=\gamma_{\varepsilon,1}$ and $\gamma_{\varepsilon}^{-}=-\gamma_{\varepsilon,2}$
in part (2)
\end{proof}
We continue in the context of a real (pre)-ordered normed space $X$
ordered by a closed generating not necessarily proper cone $X^{+}$.
If $\alpha>0$, then we will say that $X$ is
\begin{enumerate}
\item $\alpha$-\textit{conormal} if, for each $x\in X$, there exist $x^{\pm}\in X^{+}$,
such that $x=x^{+}-x^{-}$ and $\Vert x^{+}\Vert\leq\alpha\Vert x\Vert$;
\item \textit{approximately $\alpha$-conormal} if $X$ is $(\alpha+\varepsilon)$-conormal,
for all $\varepsilon>0.$
\end{enumerate}
And\^o's Theorem is equivalent to asserting that every real Banach
space, ordered by a closed generating proper cone, is $\alpha$-conormal
for some $\alpha>0$. Clearly, $\alpha$-conormality implies approximate
$\alpha$-conormality. What is less obvious is that approximate $\alpha$-conormality
is equivalent with a continuous positively homogeneous version of
the same notion, as is the content of the following consequence of
Corollary \ref{cor:ando_improved}. 
\begin{cor}
\label{cor:continous_approximate_conormality} Let $X$ be a real
\textup{(}pre\textup{)}-ordered Banach space, \textup{(}pre\textup{)}-ordered
by a closed generating not necessarily proper cone $X^{+}$, and let
$\alpha>0.$ Then the following are equivalent: 
\begin{enumerate}
\item $X$ is approximately $\alpha$-conormal;
\item For every $\varepsilon>0$, there exist continuous positively homogeneous
maps $\gamma_{\varepsilon}^{\pm}:X\to X^{+}$, such that:

\begin{enumerate}
\item $x=\gamma_{\varepsilon}^{+}(x)-\gamma_{\varepsilon}^{-}(x)$, for
all $x\in X$;
\item $\Vert\gamma_{\varepsilon}^{+}(x)\Vert\leq(\alpha+\varepsilon)\Vert x\Vert$,
for all $x\in X$.
\end{enumerate}
\end{enumerate}
\end{cor}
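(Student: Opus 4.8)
The plan is to deduce both implications from Corollary~\ref{cor:ando_improved}, exploiting the fact that its provision for an auxiliary seminorm $\rho_j$ is exactly what permits control of the single summand $\gamma^{+}$ rather than merely of the total $\Vert\gamma^{+}\Vert+\Vert\gamma^{-}\Vert$. The implication from (2) to (1) needs no machinery at all: given the maps $\gamma_{\varepsilon}^{\pm}$ of (2) and any $x\in X$, the elements $x^{\pm}:=\gamma_{\varepsilon}^{\pm}(x)$ lie in $X^{+}$, satisfy $x=x^{+}-x^{-}$, and obey $\Vert x^{+}\Vert\leq(\alpha+\varepsilon)\Vert x\Vert$, so $X$ is $(\alpha+\varepsilon)$-conormal; since $\varepsilon>0$ is arbitrary, $X$ is approximately $\alpha$-conormal.

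For the substantial implication, from (1) to (2), I would invoke Corollary~\ref{cor:ando_improved} with $J=\{1\}$ and the single continuous seminorm $\rho_{1}\colon X\times X\to\mathbb{R}$ given by $\rho_{1}(u,v):=\Vert u\Vert$, so that $\rho_{1}(\gamma^{+}(x),\gamma^{-}(x))=\Vert\gamma^{+}(x)\Vert$. I would take $\alpha_{1}:=\alpha$ and $K:=2\alpha+1$ and verify that hypothesis (2)(a) of that Corollary holds for these constants. Fixing $\varepsilon>0$, approximate $\alpha$-conormality (applied with parameter $\varepsilon/2$) provides, for each $x\in S_{X}$, elements $x^{\pm}\in X^{+}$ with $x=x^{+}-x^{-}$ and $\Vert x^{+}\Vert\leq\alpha+\varepsilon/2$; setting $\gamma_{\varepsilon}^{\pm}(x):=x^{\pm}$, with no continuity demanded at this stage, gives the required decomposition on $S_{X}$. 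The estimate $\Vert x^{-}\Vert=\Vert x^{+}-x\Vert\leq\Vert x^{+}\Vert+1\leq\alpha+1+\varepsilon/2$ then yields $\Vert x^{+}\Vert+\Vert x^{-}\Vert\leq2\alpha+1+\varepsilon=K+\varepsilon$, which is (2)(a)(ii), while $\rho_{1}(x^{+},x^{-})=\Vert x^{+}\Vert\leq\alpha+\varepsilon/2\leq\alpha_{1}+\varepsilon$ is (2)(a)(iii).

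With hypothesis (2)(a) confirmed, the equivalence in Corollary~\ref{cor:ando_improved} supplies statement (2)(b): for every $\varepsilon>0$ there exist continuous positively homogeneous maps $\gamma_{\varepsilon}^{\pm}\colon X\to X^{+}$ with $x=\gamma_{\varepsilon}^{+}(x)-\gamma_{\varepsilon}^{-}(x)$ and $\rho_{1}(\gamma_{\varepsilon}^{+}(x),\gamma_{\varepsilon}^{-}(x))\leq(\alpha_{1}+\varepsilon)\Vert x\Vert$. Unravelling $\rho_{1}$ turns the last inequality into $\Vert\gamma_{\varepsilon}^{+}(x)\Vert\leq(\alpha+\varepsilon)\Vert x\Vert$, which together with the decomposition is precisely condition (2) of the present Corollary; notice that the total-norm bound (2)(b)(ii) of Corollary~\ref{cor:ando_improved} is not even required. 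I expect no genuine obstacle here: the only real step is the recognition that the separate control of $\Vert\gamma^{+}\Vert$ must be extracted through the $\rho_{j}$-mechanism rather than from the norm bound, after which it is merely a matter of book-keeping the constants so that $\alpha_{1}=\alpha$ survives the passage $\varepsilon\downarrow0$.
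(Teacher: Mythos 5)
Your proof is correct and takes essentially the same approach as the paper's: the same trivial verification of (2)$\Rightarrow$(1), and for (1)$\Rightarrow$(2) the same application of Corollary~\ref{cor:ando_improved} with $J=\{1\}$, $\rho_{1}(u,v)=\Vert u\Vert$, constants $K=2\alpha+1$ and $\alpha_{1}=\alpha$, and the $(\alpha+\varepsilon/2)$-conormal decomposition on $S_{X}$ feeding hypothesis (2)(a) of that Corollary.
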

\begin{proof}
Clearly part (2) implies part (1). For the converse we will apply
Corollary \ref{cor:ando_improved} with $J=\{1\}$, as follows. Let
$\varepsilon>0$ be given and fixed. For each $x\in S_{X}$, the $(\alpha+\varepsilon/2)$-conormality
of $X$ implies that, for each $x\in S_{X}$, we can choose and fix
$\gamma_{\varepsilon/2}^{\pm}(x)\in X^{+}$, such that
\[
x=\gamma_{\varepsilon/2}^{+}(x)-\gamma_{\varepsilon/2}^{-}(x)\,\,(x\in S_{X}),
\]
and $\Vert\gamma_{\varepsilon/2}^{+}(x)\Vert\leq(\alpha+\varepsilon/2)$.
Then $\Vert\gamma_{\varepsilon/2}^{-}(x)\Vert\leq(\alpha+\varepsilon/2+1)$,
so that
\[
\Vert\gamma_{\varepsilon/2}^{+}(x)\Vert+\Vert\gamma_{\varepsilon/2}^{-}(x)\Vert\leq(2\alpha+1+\varepsilon)\,\,(x\in S_{X}).
\]
Define $\rho_{1}:X\times X\to\mathbb{R}$ by $\rho_{1}((x_{1},x_{2})):=\Vert x_{1}\Vert$,
for $x_{1},x_{2}\in X$, so that 
\[
\rho_{1}((\gamma_{\varepsilon/2}^{+}(x),\gamma_{\varepsilon/2}^{-}(x))=\Vert\gamma_{\varepsilon/2}^{+}(x)\Vert\leq\alpha+\varepsilon/2\leq\alpha+\varepsilon\,\,(x\in S_{X}).
\]

Thus we have found constants $K=2\alpha+1>0$, $\alpha_{1}=\alpha$
and maps $\gamma_{\varepsilon/2}^{\pm}:S_{X}\to X_{+}$ satisfying
(i), (ii) and (iii) in part (2)(a) of Corollary \ref{cor:ando_improved}.
Hence the continuous positively homogeneous maps as in part (2)(b)
of Corollary \ref{cor:ando_improved} also exist, and these are as
required. \end{proof}
\begin{rem}
The term ``conormality'' is due to Walsh \citep{Walsh} and several
variations of it have been studied. For example, a real normed space
$X$ is said to be \textit{$\alpha$-max-conormal} if, for each $x\in X$,
there exist $x^{\pm}\in X^{+}$, such that $x=x^{+}-x^{-}$ and $\max(\Vert x^{+}\Vert,\Vert x^{-}\Vert)\leq\alpha\Vert x\Vert$;
$X$ is \textit{approximately $\alpha$-max-conormal }if it is $(\alpha+\varepsilon$)-max-conormal,
for every $\varepsilon>0$. As another example, $X$ is said to be
\textit{$\alpha$-sum-conormal} if, for each $x\in X$, there exist
$x^{\pm}\in X^{+}$, such that $x=x^{+}-x^{-}$ and $\Vert x^{+}\Vert+\Vert x^{-}\Vert\leq\alpha\Vert x\Vert$;
$X$ is \textit{approximately $\alpha$-sum-conormal} if it is $(\alpha+\varepsilon$)-sum-conormal,
for every $\varepsilon>0$. Just as Corollary \ref{cor:continous_approximate_conormality}
shows that approximately $\alpha$-conormality implies its continuous
positively homogeneous version, the elements $x^{\pm}$ figuring in
the definitions of approximately $\alpha$-max-conormality and approximately
$\alpha$-sum-conormality can be chosen in a continuous and positively
homogeneous fashion. The proof is analogous to the proof of Corollary
\ref{cor:continous_approximate_conormality}, but now taking $\rho_{1}(x_{1},x_{2}):=\max(\|x_{1}\|,\|x_{2}\|)$
for approximately $\alpha$-sum-conormality, and $\rho_{1}(x_{1},x_{2}):=\Vert x_{1}\Vert+\Vert x_{2}\Vert$
for approximately $\alpha$-sum-conormality. 

The dual notion of conormality is normality (terminology due to Krein
\citep{KreinNormality}). Several equivalences between versions of
normality of an ordered Banach space $X$ and conormality of its dual
(and vice versa) are known, but are scattered throughout the literature
under various names. The most complete account of normality-conormality
duality relationships may be found in \citep{BattyRobinson}.
\end{rem}
Finally, we return to our original motivating context in the introduction,
but in a more general setting. As a rule, no additional hypotheses
on the topological space $\Omega$ are necessary to pass from $X$
to a space of $X$-valued functions, since the arguments are pointwise
in $X$, but for some converse implications it is required that the
vector valued function space in question is non-zero. If $C_{c}(\Omega)\neq\{0\}$,
for example if $\Omega$ is a non-empty locally compact Hausdorff
space, then this assumption is always satisfied.
\begin{thm}
\label{thm:continuous_functions_arbitrary_topological_space} Let
$X$ be a real or complex Banach space. Let $I$ be a non-empty set,
possibly uncountable, and let $\{C_{i}\}_{i\in I}$ be a collection
of closed not necessarily proper cones in $X$, such that every $x\in X$
can be written as an absolutely convergent series $x=\sum_{i\in I}c_{i}$,
where $c_{i}\in C_{i}$, for all $i\in I.$ Let $\Omega$ be a topological
space. Then there exists a constant $K>0$ with the property that,
for each $X$-valued continuous function $f\in C(\Omega,X)$ on $\Omega$,
there exist $f_{i}\in C(\Omega,C_{i})\,\,(i\in I)$, such that
\begin{enumerate}
\item For every $\omega\in\Omega$, $f(\omega)=\sum_{i\in I}f_{i}(\omega)$,
and $\sum_{i\in I}\Vert f_{i}(\omega)\Vert\leq K\Vert f(\omega)\Vert;$
\item $\Vert f_{i}\Vert_{\infty}\leq K\Vert f\Vert_{\infty}$, for all $i\in I$,
where the right hand side, or both the left hand side and the right
hand side, may be infinite;
\item The support of each $f_{i}$ is contained in that of $f$;
\item If $f$ vanishes at infinity, then so does each $f_{i}$;
\item If $\omega_{1},\omega_{2}\in\Omega$ and $\lambda_{1},\lambda_{2}\geq0$
are such that $\lambda_{1}f(\omega_{1})=\lambda_{2}f(\omega_{2})$,
then $\lambda_{1}f_{i}(\omega_{1})=\lambda_{2}f_{i}(\omega_{2})$,
for all $i\in I$.
\end{enumerate}
In particular, if $I$ \label{prop:decomposition_for_X_from that_for_functions}is
finite, so that $X=\sum_{i\in I}C_{i},$ then we can write the following
vector spaces as the sum of cones naturally associated with the $C_{i}$,
where the cones are closed in the last three normed spaces:
\begin{enumerate}
\item $C(\Omega,X)=\sum_{i\in I}C(\Omega,C_{i})$ for the continuous $X$-valued
functions on $\Omega$;
\item $C_{b}(\Omega,X)=\sum_{i\in I}C_{b}(\Omega,C_{i})$ for the bounded
continuous $X$-valued functions on $\Omega$;
\item $C_{0}(\Omega,X)=\sum_{i\in I}C_{0}(\Omega,C_{i})$ for the continuous
$X$-valued functions on $\Omega$ vanishing at infinity;
\item $C_{c}(\Omega,X)=\sum_{i\in I}C_{c}(\Omega,C_{i})$ for the compactly
supported continuous $X$-valued functions on $\Omega$.
\end{enumerate}
\end{thm}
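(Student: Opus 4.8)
The plan is to reduce everything to a single application of Theorem \ref{thm:continuous_selections_from_generating_cones_ell_one_case} at the level of $X$, and then to compose with $f$ pointwise. Concretely, I would first invoke part (1) of Theorem \ref{thm:continuous_selections_from_generating_cones_ell_one_case} (with $J=\emptyset$, since no $\rho_j$ enters the present statement) to obtain a constant $K>0$ together with continuous positively homogeneous component maps $\gamma_i\colon X\to C_i$ satisfying $x=\sum_{i\in I}\gamma_i(x)$ and $\sum_{i\in I}\|\gamma_i(x)\|\le K\|x\|$ for all $x\in X$; the hypothesis on absolutely convergent representations is exactly what that theorem requires. Given $f\in C(\Omega,X)$, I would then simply set $f_i:=\gamma_i\circ f$. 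Since each $\gamma_i$ is continuous and $C_i$-valued and $f$ is continuous, $f_i\in C(\Omega,C_i)$, so the candidate functions are of the required type; note that the possibly uncountable sum $\sum_{i\in I}\gamma_i(f(\omega))$ converges in $X$ because it is absolutely convergent by the norm bound, which in particular forces all but countably many terms to vanish.

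The verification of properties (1)--(5) is then routine and proceeds term by term from the properties of the $\gamma_i$. Property (1) is immediate from $x=\sum_i\gamma_i(x)$ and $\sum_i\|\gamma_i(x)\|\le K\|x\|$ evaluated at $x=f(\omega)$. For property (2) I would use the pointwise estimate $\|f_i(\omega)\|\le\sum_{j\in I}\|\gamma_j(f(\omega))\|\le K\|f(\omega)\|\le K\|f\|_\infty$ and take the supremum over $\omega$, which also covers the infinite case trivially. Property (3) follows because positive homogeneity of $\gamma_i$ gives $\gamma_i(0)=0$, so $f(\omega)=0$ forces $f_i(\omega)=0$ and hence $\{\omega:f_i(\omega)\neq 0\}\subseteq\{\omega:f(\omega)\neq 0\}$; taking closures yields the support inclusion. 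Property (4) follows from the same pointwise bound $\|f_i(\omega)\|\le K\|f(\omega)\|$, since then $\{\omega:\|f_i(\omega)\|\ge\varepsilon\}\subseteq\{\omega:\|f(\omega)\|\ge\varepsilon/K\}$ lies inside a compact set whenever $f$ vanishes at infinity. Property (5) is the cleanest use of positive homogeneity: applying $\gamma_i$ to both sides of $\lambda_1 f(\omega_1)=\lambda_2 f(\omega_2)$ and pulling the scalars out gives $\lambda_1 f_i(\omega_1)=\lambda_2 f_i(\omega_2)$.

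For the \emph{in particular} part, with $I$ finite the absolutely convergent series hypothesis reduces to $X=\sum_{i\in I}C_i$. Given $f$ in any of the four function spaces, the decomposition $f=\sum_i f_i$ with $f_i=\gamma_i\circ f$ stays inside the corresponding space: boundedness is preserved by property (2), vanishing at infinity by property (4), and compact support by property (3), since a closed subset of a compact support set is compact. This yields $C(\Omega,X)=\sum_i C(\Omega,C_i)$, and likewise for $C_b$, $C_0$ and $C_c$. Finally I would note that the cones $C_b(\Omega,C_i)$, $C_0(\Omega,C_i)$ and $C_c(\Omega,C_i)$ are closed in their ambient normed spaces: if $g_n\to g$ uniformly with each $g_n$ taking values in $C_i$, then $g_n(\omega)\to g(\omega)$ for every $\omega$, and closedness of $C_i$ in $X$ gives $g(\omega)\in C_i$.

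Since all the substance is carried by Theorem \ref{thm:continuous_selections_from_generating_cones_ell_one_case}, there is no genuine obstacle here; the only points requiring a moment's care are the convergence of the possibly uncountable pointwise sum, handled by absolute convergence, and the pointwise-closedness argument for the cones in the normed function spaces.
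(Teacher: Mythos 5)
Your proposal is correct and takes essentially the same approach as the paper, whose entire proof is to apply part (1) of Theorem \ref{thm:continuous_selections_from_generating_cones_ell_one_case} and set $f_{i}:=\gamma_{i}\circ f$. The routine pointwise verifications of properties (1)--(5) and the closedness of the cones in the sup-norm function spaces, which the paper dismisses as clear, are all filled in correctly in your write-up.
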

\begin{proof}
We apply part (1) of Theorem \ref{thm:continuous_selections_from_generating_cones_ell_one_case}
and let $f_{i}:=\gamma_{i}\circ f\,\,(i\in I)$. This supplies the
$f_{i}$ as required for the first part, and the statement on the
finite number of naturally associated cones is then clear. 
\end{proof}
Clearly then, the answer to our original question in the introduction
is affirmative: If $X$ is a Banach space with a closed generating
proper cone $X^{+}$, and $\Omega$ is a topological space, then $C_{0}(\Omega,X^{+})$
is generating in $C_{0}(\Omega,X)$. In fact, Theorem \ref{thm:continuous_functions_arbitrary_topological_space}
shows that $X^{+}$ need not even be proper. 
\begin{rem}
\label{rem:asmiow-atkinson-wikstead}As mentioned in the introduction,
if $\Omega$ is a (locally) compact Hausdorff space and $X$ is a
(pre-)ordered Banach space with closed generating cone $X^{+}$, certain
special cases of Theorem \ref{thm:continuous_functions_arbitrary_topological_space}
also follow from \citep[Theorem 2.3]{AsimowAtkinson} and \citep[Theorem 4.4]{Wickstead}.
Both of these results proceed through an application of Lazar's affine
selection theorem to show that cones of continuous affine $X^{+}$-valued
functions on a Choquet simplex $K$ are generating in spaces of continuous
affine $X^{+}$-valued functions on $K$. If $\Omega$ is a compact
Hausdorff space, the fact that $C(\Omega,X^{+})$ is generating in
$C(\Omega,X)$ follows from \citep[Theorem 2.3]{AsimowAtkinson} by
taking $K$ to be the Choquet simplex of all regular Borel probability
measures on $\Omega$, and considering the maps $\mu\mapsto\int_{\Omega}fd\mu\ (\mu\in K,\ f\in C(\Omega,X))$
and $\omega\mapsto a(\delta_{\omega})\ (\omega\in\Omega,\ a\in A(K,X))$,
where $A(K,X)$ denotes the space of continuous affine $X$-valued
functions on $K$.
\end{rem}
The converse of the four last statements in Theorem \ref{thm:continuous_functions_arbitrary_topological_space}
also holds provided the function spaces are non-zero, as is shown
by our next (elementary) result. Note that $C(\Omega,X)$ and $C_{b}(\Omega,X)$
are zero only when $\Omega\neq\emptyset$ and $X=\{0\}$. 
\begin{lem}
\label{lem:decomposition_for_X_from_that_for_functions}Let $X$ be
a real or complex normed space. Let $I$ be a finite non-empty set,
and let $\{C_{i}\}_{i\in I}$ be a collection of cones in $X$, not
necessarily closed or proper. Let $\Omega$ be a topological space.

If $C(\Omega,X)\neq\{0\}$ and $C(\Omega,X)=\sum_{i\in I}C(\Omega,C_{i})$,
then $X=\sum_{i\in I}C_{i}$; similar statements hold for $C_{b}(\Omega,X),$
$C_{0}(\Omega,X)$ and $C_{c}(\Omega,X)$. \end{lem}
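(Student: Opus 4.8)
The plan is to reduce a decomposition of an arbitrary vector $x\in X$ to the hypothesized decomposition in the function space, by manufacturing a function that takes the value $x$ at a single point $\omega_0$ and then reading off the decomposition of $x$ from the decomposition of that function evaluated at $\omega_0$. Write $\mathcal{F}(\Omega,X)$ for whichever of the four spaces $C(\Omega,X)$, $C_b(\Omega,X)$, $C_0(\Omega,X)$, $C_c(\Omega,X)$ is under consideration, and analogously $\mathcal{F}(\Omega,C_i)$ and $\mathcal{F}(\Omega,\mathbb{K})$ for the scalar field $\mathbb{K}$ of $X$. Since $I$ is finite, every sum $\sum_{i\in I}$ is finite and no convergence issue arises, so the whole argument can be run uniformly for the four cases at once.

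First I would extract a scalar test function. Because $\mathcal{F}(\Omega,X)\neq\{0\}$, choose $g\in\mathcal{F}(\Omega,X)$ and $\omega_0\in\Omega$ with $g(\omega_0)\neq 0$. By the Hahn--Banach theorem there is a continuous linear functional $\xi\in X^{*}$ with $\xi(g(\omega_0))\neq 0$. Set $\psi:=\xi\circ g:\Omega\to\mathbb{K}$. Then $\psi$ is continuous, and from the pointwise bound $|\psi(\omega)|\leq\|\xi\|\,\|g(\omega)\|$ together with the inclusion of zero-sets $\{\omega:\psi(\omega)\neq 0\}\subseteq\{\omega:g(\omega)\neq 0\}$ one sees that $\psi$ inherits boundedness, vanishing at infinity, or compact support from $g$ as appropriate. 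Hence $\psi\in\mathcal{F}(\Omega,\mathbb{K})$ and $\psi(\omega_0)\neq 0$.

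Next, for an arbitrary $x\in X$ I would form $h\in\mathcal{F}(\Omega,X)$ defined by $h(\omega):=\psi(\omega_0)^{-1}\psi(\omega)\,x$. As $h$ is a fixed scalar multiple of $\psi$ times the fixed vector $x$, the identity $\|h(\omega)\|=|\psi(\omega_0)|^{-1}|\psi(\omega)|\,\|x\|$ shows that $h$ lies in the same class $\mathcal{F}$ as $\psi$, and by construction $h(\omega_0)=x$. Invoking the hypothesis $\mathcal{F}(\Omega,X)=\sum_{i\in I}\mathcal{F}(\Omega,C_i)$, write $h=\sum_{i\in I}h_i$ with $h_i\in\mathcal{F}(\Omega,C_i)$. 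Evaluating at $\omega_0$ gives $x=h(\omega_0)=\sum_{i\in I}h_i(\omega_0)$ with each $h_i(\omega_0)\in C_i$, so $x\in\sum_{i\in I}C_i$. Since $x\in X$ was arbitrary, $X=\sum_{i\in I}C_i$.

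The only space-specific ingredient is the elementary verification that the pointwise bound on $\psi$, and hence on $h$, transfers the defining property of $\mathcal{F}$; this is where the four cases diverge trivially. The one genuinely non-formal step is the production of a nonzero scalar test function in $\mathcal{F}(\Omega,\mathbb{K})$ out of the mere non-triviality of $\mathcal{F}(\Omega,X)$, which is exactly what the separating functional from Hahn--Banach supplies. I expect this to be the main (and only) obstacle; everything else is routine evaluation at the single point $\omega_0$.
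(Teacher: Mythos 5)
Your proof is correct and takes essentially the same route as the paper's: compose a nonzero function with a continuous linear functional (Hahn--Banach) to get a nonzero scalar function $\psi$, normalize at a point $\omega_0$ where $\psi(\omega_0)\neq 0$, tensor with an arbitrary $x\in X$ to produce a function of the required class equal to $x$ at $\omega_0$, decompose it by hypothesis, and evaluate at $\omega_0$. The only difference is that you spell out the routine verifications (the pointwise bound and zero-set inclusion giving membership of $\psi$ and $h$ in the relevant space) that the paper compresses into ``suitable continuous linear functional'' and ``the proofs \ldots are similar.''
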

\begin{proof}
If there exists $0\neq f\in C(\Omega,X)$, then composing $f$ with
a suitable continuous linear functional on $X$ yields a non-zero
$\varphi\in C(\Omega)$. Choose $\omega_{0}\in\Omega$ such that $\varphi(\omega_{0})\neq0$;
we may assume that $\varphi(\omega_{0})=1$. If $x\in X$, then (employing
the usual notation) $\varphi\otimes x=\sum_{i\in I}f_{i}$, for some
$f_{i}\in C(\Omega,C_{i})\,\,(i\in I)$ by assumption. Specializing
this to the point $\omega_{0}$ shows that $x=\sum_{i\in I}c_{i},$
for some $c_{i}\in C_{i}\,\,(i\in I).$ 

The proofs for $C_{b}(\Omega,X)$ , $C_{0}(\Omega,X)$ and $C_{c}(\Omega,X)$
are similar.\end{proof}
\begin{cor}
\label{cor:equivalence_decomposition_for_X_and_for_function_spaces}Let
$X$ be a real or complex Banach space. Let $I$ be a non-empty finite
set, and let $\{C_{i}\}_{i\in I}$ be a collection of closed not necessarily
proper cones in $X$. Let $\Omega$ be a topological space. If $C_{c}(\Omega)\neq\{0\},$
for example if $\Omega$ is a non-empty locally compact Hausdorff
space, then the following are equivalent:\end{cor}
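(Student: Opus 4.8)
The plan is to obtain the corollary entirely by combining the two immediately preceding results, so that no genuinely new argument is needed. The conditions to be shown equivalent are the decomposition $X=\sum_{i\in I}C_{i}$ of the space itself, together with the four analogous decompositions of $C(\Omega,X)$, $C_{b}(\Omega,X)$, $C_{0}(\Omega,X)$ and $C_{c}(\Omega,X)$. The ``In particular'' part of Theorem \ref{thm:continuous_functions_arbitrary_topological_space} already delivers the implications from $X=\sum_{i\in I}C_{i}$ to each of the four function-space decompositions; what remains is to establish the converse implications, for which Lemma \ref{lem:decomposition_for_X_from_that_for_functions} is tailor-made, provided the relevant function space is non-zero.

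First I would dispose of the degenerate case $X=\{0\}$, in which every $C_{i}$ and every function space reduces to $\{0\}$, so that all five statements hold trivially. Assuming henceforth that $X\neq\{0\}$, I would invoke the standing hypothesis $C_{c}(\Omega)\neq\{0\}$ to produce non-zero elements of the function spaces: fixing $0\neq\varphi\in C_{c}(\Omega)$ and $0\neq x\in X$, the elementary tensor $\varphi\otimes x$ is a non-zero member of $C_{c}(\Omega,X)$. Since
\[
C_{c}(\Omega,X)\subseteq C_{0}(\Omega,X)\subseteq C_{b}(\Omega,X)\subseteq C(\Omega,X),
\]
each of the four vector-valued function spaces is non-zero, and hence Lemma \ref{lem:decomposition_for_X_from_that_for_functions} applies to each, yielding the implication from the corresponding function-space decomposition back to $X=\sum_{i\in I}C_{i}$.

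Putting the two halves together closes the equivalences: $X=\sum_{i\in I}C_{i}$ implies all four function-space decompositions by Theorem \ref{thm:continuous_functions_arbitrary_topological_space}, while any one of those four decompositions implies $X=\sum_{i\in I}C_{i}$ by Lemma \ref{lem:decomposition_for_X_from_that_for_functions}, whence all five conditions are mutually equivalent. The only point that genuinely requires attention, and thus the only conceivable obstacle, is ensuring that the function spaces are non-zero so that the lemma can be applied; this is precisely what the hypothesis $C_{c}(\Omega)\neq\{0\}$ secures, and beyond this verification the corollary carries no further content.
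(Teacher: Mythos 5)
Your proposal is correct and follows essentially the same route as the paper's own proof: the forward implications come from Theorem \ref{thm:continuous_functions_arbitrary_topological_space}, the converses from Lemma \ref{lem:decomposition_for_X_from_that_for_functions}, after disposing of $X=\{0\}$ and using $C_{c}(\Omega)\neq\{0\}$ to see that all four function spaces are non-zero. Your explicit construction of $\varphi\otimes x$ and the chain of inclusions merely spells out what the paper leaves implicit.
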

\begin{enumerate}
\item $X=\sum_{i\in I}C_{i}$;
\item $C(\Omega,X)=\sum_{i\in I}C(\Omega,C_{i})$;
\item $C_{b}(\Omega,X)=\sum_{i\in I}C_{b}(\Omega,C_{i})$;
\item $C_{0}(\Omega,X)=\sum_{i\in I}C_{0}(\Omega,C_{i})$;
\item $C_{c}(\Omega,X)=\sum_{i\in I}C_{c}(\Omega,C_{i})$.\end{enumerate}
\begin{proof}
If $X=\{0\}$ there is nothing to prove. If $X\neq\{0\}$, then the
fact that $C_{c}(\Omega)\neq\{0\}$ implies that $C_{c}(\Omega,X)\neq\{0\}$,
hence that the other three spaces of $X$-valued functions are non-zero
as well. Combining Theorem \ref{thm:continuous_functions_arbitrary_topological_space}
and Lemma \ref{lem:decomposition_for_X_from_that_for_functions} therefore
concludes the proof.
\end{proof}
Theorem \ref{thm:continuous_functions_arbitrary_topological_space}
and Corollary \ref{cor:equivalence_decomposition_for_X_and_for_function_spaces}
are based on part (1) of Theorem \ref{thm:continuous_selections_from_generating_cones_ell_one_case}.
It is also possible to take part (2) into account and, e.g., obtain
results on various types of conormality for spaces of continuous functions
with values in a (pre)-ordered Banach space. Here is an example, where
part (2) of Theorem \ref{thm:continuous_selections_from_generating_cones_ell_one_case}
is used via an appeal to Corollary \ref{cor:continous_approximate_conormality}.
Note that, analogous to Corollary \ref{cor:equivalence_decomposition_for_X_and_for_function_spaces},
the approximate $\alpha$-conormality of $X$ and of the three normed
spaces of $X$-valued functions are all equivalent if $C_{c}(\Omega)\neq\{0\}$.
\begin{cor}
\label{cor:equivalence_approximate_normality_for_X_and_for_function_spaces}Let
$X$ be a real \textup{(}pre\textup{)}-ordered Banach space, \textup{(}pre\textup{)}-ordered
by a closed generating not necessarily proper cone $X^{+}$, and let
$\Omega$ be a topological space. Suppose that $\alpha>0$. 

If $X$ is approximately $\alpha$-conormal, then so are $C_{b}(\Omega,X$),
$C_{0}(\Omega,X)$, and $C_{c}(\Omega,X)$.

If $C_{b}(\Omega,X)\neq\{0\}$ and $C_{b}(\Omega,X)$ is approximately
$\alpha$-conormal, then $X$ is approximately $\alpha$-conormal. 

If $C_{0}(\Omega,X)\neq\{0\}$ and $C_{0}(\Omega,X)$ is approximately
$\alpha$-conormal, then $X$ is approximately $\alpha$-conormal. 

If $C_{c}(\Omega,X)\neq\{0\}$ and $C_{c}(\Omega,X)$ is approximately
$\alpha$-conormal, then $X$ is approximately $\alpha$-conormal.\end{cor}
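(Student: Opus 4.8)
The plan is to prove the single forward implication and the three converse implications by different mechanisms: the forward direction rests on the continuous positively homogeneous decomposition supplied by Corollary \ref{cor:continous_approximate_conormality}, while each converse is an evaluation-at-a-point argument in the spirit of Lemma \ref{lem:decomposition_for_X_from_that_for_functions}.

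For the forward implication, suppose $X$ is approximately $\alpha$-conormal. By Corollary \ref{cor:continous_approximate_conormality}, for each $\varepsilon>0$ there exist continuous positively homogeneous maps $\gamma_{\varepsilon}^{\pm}:X\to X^{+}$ with $x=\gamma_{\varepsilon}^{+}(x)-\gamma_{\varepsilon}^{-}(x)$ and $\|\gamma_{\varepsilon}^{+}(x)\|\leq(\alpha+\varepsilon)\|x\|$ for all $x\in X$. Given $f$ in one of the three function spaces, I would set $f^{\pm}:=\gamma_{\varepsilon}^{\pm}\circ f$. These are continuous precisely because the $\gamma_{\varepsilon}^{\pm}$ are continuous (this is the whole reason the continuous version of conormality, rather than its bare definition, is needed), they take values in $X^{+}$, and they satisfy $f=f^{+}-f^{-}$ pointwise with $\|f^{+}(\omega)\|\leq(\alpha+\varepsilon)\|f(\omega)\|$ for every $\omega\in\Omega$. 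Membership of $f^{\pm}$ in the correct space follows from this pointwise bound: boundedness of $f^{+}$, hence of $f^{-}=f^{+}-f$, in the $C_{b}$ case; vanishing at infinity in the $C_{0}$ case; and, using positive homogeneity so that $\gamma_{\varepsilon}^{\pm}(0)=0$ and thus $f^{\pm}$ vanishes wherever $f$ does, containment of supports in the $C_{c}$ case. Taking the supremum of the pointwise inequality gives $\|f^{+}\|_{\infty}\leq(\alpha+\varepsilon)\|f\|_{\infty}$, so each space is $(\alpha+\varepsilon)$-conormal for every $\varepsilon>0$.

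For the three converse implications I would argue uniformly. Assume the relevant function space is non-zero and approximately $\alpha$-conormal, and fix $x\in X$ with $x\neq0$ together with $\varepsilon>0$. Composing a non-zero element of the function space with a suitable continuous real-linear functional on $X$ produces a non-zero scalar function $\varphi$ lying in $C_{b}(\Omega)$, $C_{0}(\Omega)$ or $C_{c}(\Omega)$ respectively; after rescaling I may assume $\|\varphi\|_{\infty}=1$, and, replacing $\varphi$ by $-\varphi$ if necessary, that $\sup_{\omega}\varphi(\omega)=1$. I would then pick $\omega_{0}$ with $\varphi(\omega_{0})>1-\eta$ for a small $\eta>0$, apply approximate $\alpha$-conormality of the function space to $g:=\varphi\otimes x$, which lies in the space and has $\|g\|_{\infty}=\|x\|$, to obtain $g=g^{+}-g^{-}$ with $g^{\pm}$ valued in $X^{+}$ and $\|g^{+}\|_{\infty}\leq(\alpha+\varepsilon')\|x\|$, and finally set $x^{\pm}:=g^{\pm}(\omega_{0})/\varphi(\omega_{0})$. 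Evaluating at $\omega_{0}$ yields $x=x^{+}-x^{-}$ with $x^{\pm}\in X^{+}$ and $\|x^{+}\|\leq(\alpha+\varepsilon')\|x\|/(1-\eta)$.

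I expect the delicate point to be the constant in the converse. Because the supremum of $\varphi$ need not be attained in the $C_{b}$ case, I cannot take $\eta=0$ there and must instead choose $\varepsilon'$ and $\eta$ small enough that $(\alpha+\varepsilon')/(1-\eta)\leq\alpha+\varepsilon$, which is possible since this quotient tends to $\alpha$ as $\varepsilon',\eta\to0$; this absorbs the loss into the \emph{approximate} nature of the conclusion. In the $C_{0}$ and $C_{c}$ cases the scalar function attains its supremum on a compact level set, so one may take $\eta=0$ and match $\varepsilon'=\varepsilon$ directly. Together the forward implication and this evaluation argument, which is the quantitative refinement of Lemma \ref{lem:decomposition_for_X_from_that_for_functions}, establish all four assertions.
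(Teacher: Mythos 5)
Your proof is correct and follows essentially the same route as the paper: the forward direction composes $f$ with the continuous positively homogeneous maps supplied by Corollary \ref{cor:continous_approximate_conormality}, and each converse evaluates a decomposition of $\varphi\otimes x$ at a point where $\varphi$ is close to its supremum, which is exactly the paper's quantitative refinement of the evaluation argument of Lemma \ref{lem:decomposition_for_X_from_that_for_functions}. Your added observation that the supremum of $\varphi$ is attained in the $C_{0}(\Omega,X)$ and $C_{c}(\Omega,X)$ cases (so one may take $\eta=0$ there) is a harmless minor refinement; the paper instead handles all three cases uniformly with its $\varepsilon/2$ bookkeeping.
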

\begin{proof}
Let $X$ be approximately $\alpha$-conormal, and let $f\in C_{b}(\Omega,X)$
and $\varepsilon>0$ be given. Corollary \ref{cor:continous_approximate_conormality}
supplies continuous positively homogeneous maps $\gamma_{\varepsilon}^{\pm}:X\to X^{+}$
, such that $x=\gamma_{\varepsilon}^{+}(x)-\gamma_{\varepsilon}^{-}(x)$
and $\Vert\gamma_{\varepsilon}^{+}(x)\Vert\leq(\alpha+\varepsilon)\Vert x\Vert$,
for all $x\in X$. Then $\gamma_{\varepsilon}^{\pm}\circ f\in C_{b}(\Omega,X^{+})$,
$f=\gamma_{\varepsilon}^{+}\circ f-\gamma_{\varepsilon}^{-}\circ f$,
and $\Vert\gamma_{\varepsilon}^{\pm}\circ f\Vert_{\infty}\leq(\alpha+\varepsilon)\Vert f\Vert_{\infty}$,
so that $C_{b}(\Omega,X)$ is approximately $\alpha$-conormal. The
proof for $C_{0}(\Omega,X)$ and $C_{c}(\Omega,X)$ is similar. 

If $C_{b}(\Omega,X)\neq\{0\}$ and $C_{b}(\Omega,X)$ is approximately
$\alpha$-conormal, let $x\in X$ and $\varepsilon>0$ be given. As
in the proof of Corollary \ref{lem:decomposition_for_X_from_that_for_functions}
we find a non-zero $\varphi\in C_{b}(\Omega)$, and we may assume
that $\Vert\varphi\Vert_{\infty}=1$ and $\varphi$ is real-valued.
Passing to $-\varphi$ if necessary we obtain a sequence $\{\omega_{n}\}\subset\Omega$
such that $0<\varphi(\omega_{n})\uparrow1$. Hence there exists $\omega_{n_{0}}\in\Omega$
such that $0<\varphi(\omega_{n_{0}})$ and $(\alpha+\varepsilon/2)\varphi(\omega_{n_{0}})^{-1}<\alpha+\varepsilon$.
By assumption, there exist $f^{\pm}\in C_{b}(\Omega,X^{+})$, such
that $\varphi\otimes x=f^{+}-f^{-}$ and $\Vert f^{+}\Vert_{\infty}\leq(\alpha+\varepsilon/2)\Vert\varphi\otimes x\Vert_{\infty}=(\alpha+\varepsilon/2)\Vert x\Vert$.
In particular, $\varphi(\omega_{n_{0}})x=f^{+}(\omega_{n_{0}})-f^{-}(\omega_{n_{0}})$.
Since $\varphi(\omega_{n_{0}})^{-1}f^{\pm}(\omega_{n_{0}})\in X^{+}$,
and $\Vert\varphi(\omega_{n_{0}})^{-1}f^{+}(\omega_{n_{0}})\Vert\leq\varphi(\omega_{n_{0}})^{-1}\Vert f^{+}\Vert_{\infty}\leq\varphi(\omega_{n_{0}})^{-1}(\alpha+\varepsilon/2)\Vert x\Vert\leq(\alpha+\varepsilon)\Vert x\Vert$,
we conclude that $X$ is approximately $\alpha$-conormal.

The proofs for $C_{0}(\Omega,X)$ and $C_{c}(\Omega,X)$ are similar.\end{proof}
\begin{rem}
In the context of Corollary \ref{cor:equivalence_approximate_normality_for_X_and_for_function_spaces},
the conclusion that the Banach spaces $C_{b}(\Omega,X)$ and $C_{0}(\Omega,X)$
are approximately $\alpha$-conormal shows that part (1) of Corollary
\ref{cor:continous_approximate_conormality} is satisfied for these
(pre)-ordered Banach spaces. Hence (2) is valid as well. Therefore,
if $X$ is approximately $\alpha$-conormal, then $C_{b}(\Omega,X)$
and $C_{0}(\Omega,X)$ are continuously positively homogeneously approximately
$\alpha$-conormal in the sense of part (2) of Corollary \ref{cor:continous_approximate_conormality}.
The converse holds for $C_{b}(\Omega,X)$ if this space is non-zero,
and similarly for $C_{0}(\Omega,X)$. 
\end{rem}

\subsection*{Acknowledgements}

The authors would like to thank Klaas Pieter Hart for helpful correspondence
and Anthony Wickstead for bringing the references \citep{AsimowAtkinson}
and \citep{Wickstead} to their attention.

	% % % % % % % % % % % % % % % % %
	%	Bibliography
	% % % % % % % % % % % % % % % % %
	\bibliographystyle{amsplain}
	\bibliography{bib/all.bib}

\end{document}